\renewcommand{\H}[1]{{\textbf{[#1]}}}
\renewcommand{\P}{{\mathbb P}}
\renewcommand{\S}{{\mathcal S}}
\newcommand{\B}{{\mathbb B}}
\newcommand{\C}{{\mathbb C}}
\newcommand{\R}{{\mathbb R}}
\newcommand{\N}{{\mathbb N}}
\newcommand{\I}{{\mathbb I}}
\newcommand{\atanh}{{\rm atanh}}
\newcommand{\supp}{{\rm supp}}
\renewcommand{\c}{{\mathbf c}}
\newcommand{\E}{{\mathbb E}}
\newcommand{\F}{{\mathcal F}}
\newcommand{\leftB}{{{[\![}}}
\newcommand{\rightB}{{{]\!]}}}
\renewcommand{\S}{{\mathbb S}}
\theoremstyle{plain}
\newtheorem{theorem}{Theorem}[section]
\newtheorem{proposition}[theorem]{Proposition}
\theoremstyle{definition}
\newtheorem{remark}{Remark}[section]
\newcommand{\mysection}{\setcounter{equation}{0} \section}
\begin{document}

\title{Limit Theorems for random walks in the hyperbolic space}
\author{V. Konakov}

\address{Higher School of Economics, National Research University, Pokrovski Boulevard 11, Moscow, Russian Federation. 
}

\email{VKonakov@hse.ru}

\author{ S. Menozzi}
\address{ Laboratoire de Mod\'elisation Math\'ematique d'Evry\\ 
(LaMME), UMR CNRS 8071, Universit\'e d'Evry Val-d'Essonne, 23 Boulevard de France, 91037 Evry.
}
\email{stephane.menozzi@univ-evry.fr}

\keywords{Hyperbolic space, Random Walks and Brownian motion, Local  Limit Theorems.\\
\indent \textbf{MSC2020:}  60F05, 43A85}

\date{\today}

\thanks{For the first author, the article was prepare within the framework of the Basic Research Program at HSE University.}

\maketitle

\begin{abstract}
We  prove central and local limit theorems for random walks on the Poincaré hyperbolic space of dimension $n\ge 2 $.
To this end we use the ball model and describe the walk therein through the M\"obius addition and multiplication. This also allows to derive a corresponding law of large numbers.

\end{abstract}

\section{Introduction}
Let $\mathbb H^n $ ($n\ge 2) $ denote the real hyperbolic space of dimension $n\ge 2$. This is the complete and simply connected Riemannian manifold with constant negative sectional curvature equal to -1.

We will be interested in establishing limit theorems (central, local, law of large numbers) for a corresponding underlying random walk. 
In the case $n=2,3$, the central limit theorem was first obtained for the models of the Poincaré-Lobachevski ball and plane in the seminal paper by Karpelevich \textit{et al.} \cite{karp:tutu:shur:59}. It was shown therein that the suitably renormalized sum converges in law to the \textit{normal} law, corresponding to the fundamental solution of the heat equation involving the Laplace-Beltrami operator, at a certain time that could be seen as a \textit{variance} in this setting.
We can as well refer to the monograph of Terras \cite{terr:13}, Section 3.2 for additional details. 

In \cite{karp:tutu:shur:59} the M\"obius addition already appeared in order to provide a non-Euclidean analogue of the \textit{sum} of random variables.
In a more general setting, the connection with Harmonic analysis on the underlying appropriate structure, like e.g. Lie groups, provided a natural way to define a corresponding random walk and to investigate the associated limit theorems. Indeed, the harmonic analysis somehow provides natural extensions of the characteristic functions, variances. We again refer to \cite{terr:13} and \cite{terr:88} for results in that direction.

The above results concentrate on the behavior of a renormalized sum.
Concerning a more functional approach we can mention the works by Pinsky (see \cite{pins:75} and \cite{pins:76}) which established how the so-called \textit{isotropic transport process}\footnote{This process is somehow very natural to consider. Its construction starting from a point $x$ in a complete Riemannian manifold $M$ consists in choosing uniformly a direction on the unit sphere of the tangent space $T_{x}(M) $. One then follows the geodesic line starting from $x_0$ for the chosen direction for an appropriate time and then reiterate.} actually converges in law, when suitably normalized, to the Brownian motion on the manifold when the Ricci curvature is bounded from below. The case of a Poincaré plane when the geodesic ball is also discretized, thus leading to a space-time discrete approximation, is discussed in Gruet \cite{grue:08}.

We will here focus on the central limit theorem, the local limit (we will as well establish as a by-product of our analysis a law of large numbers) for any dimension $n$ of the hyperbolic space. Whereas local limit theorems were previously established for some nilpotent groups, see e.g. Breuillard \cite{breu:05} for the Heisenberg group, this seems to be, to the best of our knowledge, new in the current setting. To this end we will rely on the Poincaré ball model and use related harmonic analysis tools.

The paper is organized as follows. We describe the random walk and the related convergence results in Section \ref{MAIN_RES}. We provide in Section \ref{SEC_FOUR} the main tools from harmonic analysis needed to perform the analysis. Eventually, Section \ref{PROOFS} is dedicated to the proof of the main results. 

\section{Main Results}
\label{MAIN_RES}
\subsection{Setting} 
Let $\mathbb H^n $ ($n\ge 2) $ denote the real hyperbolic space of dimension $n\ge 2$. This is the complete and simply connected Riemannian manifold with constant negative sectional curvature equal to -1.
We will consider throughout the document the Poincar\'e ball model for the  hyperbolic space $\mathbb H^n $ ($n\ge 2) $. Namely, we introduce:
\begin{equation*}
\mathbb B^n:=\{x\in \R^n: \|x\|< 1 \},
\end{equation*}
where $\|\cdot\| $ stands for the Euclidean norm, which we endow with the metric 
\begin{equation*}
ds^2=\frac{4\big(dx_1^2+\cdots dx_n^2 \big)}{(1-\|x\|^2)^2}
\end{equation*}
and the corresponding Riemannian volume
\begin{equation}
\label{VOLUME}
\mu_{\mathbb B^n}(dx)=2^n(1-\|x\|^2)^{-n} dx.
\end{equation}
We recall that the hyperbolic  distance to the origin writes for $z\in \mathbb B^n $:
\begin{equation*}
\eta=2\ \atanh(\|z\|)=\log \Big(\frac{1+\|z\|}{1-\|z\|} \Big).\label{RADIAL_HYPER},
\end{equation*} 
where we use the notation $\eta$ for coherence with \cite{karp:tutu:shur:59}.

Hence,  for $z\in \B^n$,
$$z=r \Theta =\tanh(\frac \eta 2) \Theta, \ \Theta\in \mathbb S^{d-1}.$$ 
The coordinates $(r,\Theta)\in [0,1)\times \mathbb S^{d-1}$ are the \textit{usual} polar coordinates and $(\eta,\Theta)\in [0,+\infty)\times \mathbb S^{d-1} $ the geodesic ones. \textcolor{black}{In the geodesic coordinates, we have:
\begin{equation}\label{RADIAL_GE0_MEAS}
\mu_{\mathbb B^n}(dx) = \sinh(\eta)^{n-1}d\eta  \Lambda(d\Theta),
\end{equation}
where $ \Lambda$ stands for the Lebesgue measure of the sphere $\mathbb S^{n-1} $}.

 To define a random walk on $\B^n $ we will introduce the Möbius addition  $(x,y)\in \B^n\times \B^n\mapsto \B^n  $ which writes in Euclidean coordinates:
 \begin{equation}\label{Mobius_ADD}
x\oplus y:=\frac{(1+2\langle x,y\rangle+\|y\|^2) x+(1-\|x\|^2)y}{1+2\langle x,y\rangle+\|x\|^2\|y\|^2},
 \end{equation} 
where $\langle\cdot,\cdot\rangle $ stand for the \textit{usual} Euclidean scalar product on $\R^n $.

\textcolor{black}{We will also use the left inverse, i.e. for all $x\in \B^n$, \eqref{Mobius_ADD} directly gives that  $\ominus x:=-x $ satisfies $\ominus x\oplus x=0 $}.

We refer to \cite{ahlf:81} for a thorough presentation of related operations and properties (see also Section \ref{SEC_FOUR} below). Let us also mention that the operation \eqref{Mobius_ADD} can also be written through the formalism of Clifford algebras, which then allows to extend the expression of the Möbius transform/addition in the complex plane to the current setting (see e.g. \cite{ferr:15}). 

We also introduce the Möbius multiplication, i.e. for all $\gamma\in \R $, $z\in \B^n $,
\begin{equation}\label{MULT_MOBIUS}
\gamma \otimes z:=\tanh(\gamma\ \atanh(\|z\|)) \frac{z}{\|z\|},
\end{equation}
and refer to \cite{bara:unga:20} for a thorough discussion about this operation and other operations on the unit ball connected with differential geometry.
\subsection{The random walk and associated convergence results}
Let $Z$ be a $\B^n $-valued random variable defined on some probability space $(\Omega,\mathcal A,\P) $. We will further assume that:
\begin{trivlist}
\item[]\H{\bf R} $Z$ has \textit{radial} density $f_Z\in C_0^\infty(\B^n) $ w.r.t. the Riemannian volume.
\end{trivlist}

Let now $(Z^j)_{j\ge 1} $ be a sequence of i.i.d random variables which have the same law as $Z $. Define then for $N\in \N$,
\begin{align}
\label{WALKS}
\bar S_N&:=\oplus_{j=1}^N \frac 1N \otimes Z^j,\notag\\
S_N&:=\oplus_{j=1}^N \frac 1{\sqrt N} \otimes Z^j.
\end{align}
Then the following results hold:
\begin{theorem}[Law of large numbers] \label{LLN} Under \H{R},
$$\bar S_N \overset{\P}{\underset{N}{\longrightarrow}} 0. $$
\end{theorem}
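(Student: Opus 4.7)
The plan is to approximate the Möbius walk by a classical Euclidean random walk and reduce to the standard law of large numbers, exploiting the cancellation provided by the radial assumption \H{R}. Set $V_j := \tfrac{1}{N}\otimes Z^j$, $\eta_j := 2\atanh(\|Z^j\|)$, $\Theta_j := Z^j / \|Z^j\|$, and $U_k := V_1 \oplus \cdots \oplus V_k$. Since $f_Z \in C_0^\infty(\B^n)$, there is $r_0 < 1$ with $\|Z^j\| \le r_0$ a.s., whence $\|V_j\| = \tanh(\eta_j/(2N)) \le C/N$. Because $y \mapsto x \oplus y$ is a hyperbolic isometry sending $0$ to $x$, the triangle inequality combined with the classical Euclidean LLN gives
\begin{equation*}
d_{\mathbb H^n}(U_k, 0) \;\le\; \sum_{j=1}^k d_{\mathbb H^n}(V_j, 0) \;=\; \frac{1}{N}\sum_{j=1}^k \eta_j \;\le\; \E[\eta_1]+1
\end{equation*}
uniformly in $k\le N$ on an event $A_N$ with $\P(A_N)\to 1$; in particular $\|U_k\| \le M < 1$ on $A_N$.

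A direct Taylor expansion of \eqref{Mobius_ADD} in the second argument gives, uniformly for $x$ in the compact $\{\|x\|\le M\}\subset\B^n$,
\begin{equation*}
x \oplus y - x - y \;=\; -\|x\|^2\, y + O(\|y\|^2), \qquad \|y\|\to 0.
\end{equation*}
Set $\F_k := \sigma(Z^1,\ldots,Z^k)$ and $R_k := U_{k-1}\oplus V_k - U_{k-1} - V_k$. By \H{R}, $\Theta_k$ is uniform on $\mathbb S^{n-1}$ and independent of $\eta_k$, so $\E[V_k\mid \F_{k-1}] = \E[\tanh(\eta_k/(2N))]\,\E[\Theta_k] = 0$. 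Plugging the expansion into $R_k$ and using $\|V_k\|\le C/N$, one obtains on $A_N$ that $|R_k| \le C_M/N$, but the cancellation $\E[V_k\mid\F_{k-1}] = 0$ forces the stronger bound $|\E[R_k\mid\F_{k-1}]| \le C_M/N^2$, and similarly $\E[R_k^2\mid \F_{k-1}] \le C_M/N^2$.

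Introduce the Euclidean sum $\tilde S_N := \sum_{j=1}^N V_j$. By $\E[V_j]=0$ and independence, $\E[\|\tilde S_N\|^2] = \sum_j \E[\|V_j\|^2] \le \E[\eta_1^2]/(4N)$, so $\tilde S_N\to 0$ in $L^2$. Writing the error $\bar S_N - \tilde S_N = \sum_{k=1}^N R_k = M_N + C_N$ with $M_N$ the Doob martingale and $C_N := \sum_k \E[R_k\mid\F_{k-1}]$ its compensator, we have $\langle M\rangle_N \le \sum_k \E[R_k^2\mid\F_{k-1}] = O(1/N)$ and $|C_N|\le \sum_k |\E[R_k\mid\F_{k-1}]| = O(1/N)$, so both vanish in probability on $A_N$. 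Since $\P(A_N^c)\to 0$, combining with the $L^2$-convergence of $\tilde S_N$ yields $\bar S_N\to 0$ in probability. The delicate ingredient is the bootstrap $\|U_k\|\le M<1$ uniformly in $k$: without it the Taylor expansion would not apply uniformly and the estimates for $R_k$ would deteriorate near $\partial \B^n$; this is precisely where the isometric nature of Möbius translations, via the hyperbolic triangle inequality, is essential.
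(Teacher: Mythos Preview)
Your argument is correct and takes a genuinely different route from the paper. The paper proves Theorem~\ref{LLN} via harmonic analysis: using Proposition~\ref{PROP_CONV} it factors the characteristic function $\Phi_{\bar S_N}(\lambda)=\big(\Phi_{\frac1N\otimes Z}(\lambda)\big)^N$, Taylor-expands $\Phi_{\frac1N\otimes Z}(\lambda)=1-\lambda^2\int_0^1(1-\delta)\hat f^{(2)}_{\frac1N\otimes Z}(\delta\lambda)/\hat f_{\frac1N\otimes Z}(0)\,d\delta$, and invokes the bound $V_{\frac1N\otimes Z}\le C/N^2$ from Proposition~\ref{VAR_OF_THE_WALKS} to conclude $\Phi_{\bar S_N}(\lambda)\to 1$. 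Your approach is instead pathwise: you linearise the M\"obius addition in Euclidean coordinates and control the difference $\bar S_N-\sum_j V_j$ by a martingale--compensator decomposition, reducing everything to the ordinary $L^2$ law of large numbers. This is more elementary in that it avoids the Fourier--Helgason machinery of Section~\ref{SEC_FOUR} entirely; on the other hand the paper's route is what aligns with, and sets up, the proofs of Theorems~\ref{CLT} and~\ref{LLT}.

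Two minor points. First, the step you flag as ``delicate'' is in fact trivial here: since $f_Z$ has compact support, $\eta_j\le \bar R:=2\atanh(r_0)$ almost surely, so $d_{\mathbb H^n}(U_k,0)\le \frac1N\sum_{j\le k}\eta_j\le \bar R$ \emph{deterministically}, and one may take $M=r_0$, $A_N=\Omega$; the appeal to the classical LLN is unnecessary. This also dissolves the measurability worry that $A_N$ might depend on the full sample while you condition on $\mathcal F_{k-1}$. Second, the sentence ``the cancellation forces the stronger bound $|\E[R_k\mid\mathcal F_{k-1}]|\le C_M/N^2$'' should be unpacked: from your expansion $R_k=-\|U_{k-1}\|^2 V_k+Q_k(U_{k-1},V_k)$ with $|Q_k|\le C_M\|V_k\|^2$ uniformly on $\{\|U_{k-1}\|\le M\}$, the linear term has zero conditional mean by radiality and the quadratic remainder is $O(N^{-2})$; this is exactly what you need, but it deserves one explicit line.
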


It will be shown in Section \ref{PROOFS} that $S_N$ has a density $f_{S_N}$, which can be expressed as the non Euclidean convolution of the densities of the $(Z^j)_{j\in \leftB 1, N\rightB}$. We quantify below the asymptotic behavior of that density.
\footnote{From now on we will denote by $\leftB\cdot, \cdot\rightB $ integer intervals.}
\begin{theorem}[Central limit theorem] \label{CLT} Under \H{R},
it holds that for measurable  sets $A$ in $\B^n $, 
 $$\int_{\B^n}\I_A(x) f_{S_{N}}(x) \mu_{\B^n} (dx)\underset{N}\longrightarrow \int_{\B^n}\I_A(x) \Psi(t,x) \mu_{\B^n}(dx),$$
 with 
\textcolor{black}{ $t:=\frac 1n  \int_0^{+\infty } \tilde \eta ^2  \mu_{Z,R}(d\tilde \eta)$, 
},
 where $\mu_{Z,R} $ stands for the  measure induced by the law of $Z$ in geodesic coordinates. 
 
 Also, $\Psi(t,\cdot)$  stands for the hyperbolic heat kernel in the model $\B^n $ for $\mathbb H^n$. Namely it denotes the fundamental solution of the equation
 $$ \frac 12 \Delta_{\B^n}\Psi (t,x)=\partial_t \Psi (t,x),\ \textcolor{black}{\Psi(0,\cdot)=\delta(\cdot)}.
$$
The specific expression of $\Psi(t,\cdot)$ will be given in Section \ref{HK_SEC} below. It plays the same role in the current setting as the normal density in the classical Euclidean central limit theorem.
 
\end{theorem}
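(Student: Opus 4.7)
The plan is to adapt the classical Euclidean Fourier proof of the CLT, with the spherical (Helgason) Fourier transform for radial functions on $\B^n$ playing the role of the usual characteristic function and the Möbius-type convolution attached to $\oplus$ playing the role of the Euclidean convolution. Under \H{R} the density $f_Z$ is smooth, compactly supported and radial; since Möbius scalar multiplication \eqref{MULT_MOBIUS} acts as $\eta\mapsto\eta/\sqrt N$ on the geodesic radius while preserving radial symmetry, each summand $\frac{1}{\sqrt N}\otimes Z^j$ is again radial and its density is a clean rescaling of $f_Z$ in the coordinates of \eqref{RADIAL_GE0_MEAS}. Relying on the tools of Section \ref{SEC_FOUR}, I would first show that for independent radial variables the law of $X\oplus Y$ admits as density the Möbius convolution of the marginal densities, and that on radial functions this convolution is commutative, associative, and diagonalised by the spherical transform, thereby yielding the key identity
\begin{equation*}
\widehat{f_{S_N}}(\lambda) \;=\; \bigl(\widehat{f_{(1/\sqrt N)\otimes Z}}(\lambda)\bigr)^{N}.
\end{equation*}

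Next I would carry out the small-parameter expansion. The spherical function $\varphi_\lambda$ is the radial eigenfunction of $\Delta_{\B^n}$ with eigenvalue $-(\lambda^2+\rho^2)$, $\rho=(n-1)/2$, normalised by $\varphi_\lambda(0)=1$. Since in geodesic polar coordinates the Laplace--Beltrami operator acts on smooth radial functions as $f''(\eta)+(n-1)\coth(\eta)f'(\eta)$, evaluation at the origin gives $\varphi_\lambda''(0)=-(\lambda^2+\rho^2)/n$, hence
\begin{equation*}
\varphi_\lambda(\eta) \;=\; 1 - \frac{\lambda^2+\rho^2}{2n}\,\eta^2 + O(\eta^4).
\end{equation*}
Plugging this into $\widehat{f_{(1/\sqrt N)\otimes Z}}(\lambda)=\E[\varphi_\lambda(\eta_{Z}/\sqrt N)]$ and using the boundedness of $\eta_Z$ afforded by \H{R} to control the remainder, one gets
\begin{equation*}
\widehat{f_{S_N}}(\lambda) \;=\; \Bigl(1 - \frac{(\lambda^2+\rho^2)\,t}{2N} + O(N^{-2})\Bigr)^{N} \;\longrightarrow\; e^{-(\lambda^2+\rho^2)t/2},
\end{equation*}
with $t=\frac{1}{n}\int_0^{+\infty}\tilde\eta^2\mu_{Z,R}(d\tilde\eta)$; the limit coincides with $\widehat{\Psi}(t,\lambda)$, as follows from the explicit heat-kernel formula to be recalled in Section \ref{HK_SEC}.

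To conclude, I would transfer this spherical-side convergence back to convergence of the measures $f_{S_N}\mu_{\B^n}$ by inverting with the Plancherel formula on $\B^n$ (whose density involves the Harish--Chandra $c$-function) and applying dominated convergence to smooth compactly supported test functions; the case of indicators $\I_A$ of measurable sets then follows by a standard approximation argument together with the uniform integrability afforded by Gaussian-type upper bounds on $\Psi(t,\cdot)$. The main obstacle lies in the very first step, because \eqref{Mobius_ADD} defines a gyrogroup rather than a group operation: associativity fails in general, so the iterate $\oplus_{j=1}^N$ has to be interpreted with care and the identification of the law of $\oplus_{j}X_j$ with an $N$-fold convolution is not automatic. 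The rescue is that on $K$-invariant (radial) densities, with $K=\mathrm{SO}(n)$, the gyrations become trivial and the Möbius convolution collapses to the standard bi-$K$-invariant convolution on $\mathrm{SO}_0(n,1)/K\simeq \mathbb H^n$, to which the spherical Fourier analysis of Section \ref{SEC_FOUR} directly applies. Making this identification rigorous, so that the convolution-theorem identity above is justified, is the technical heart of the argument; once it is in place, the remaining steps mirror the Euclidean template almost verbatim.
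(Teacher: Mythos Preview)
Your approach is correct and closely parallels the paper's, with one notable difference in the expansion step. The paper normalises to the second-kind characteristic function $\Phi_X(\lambda)=\hat f_X(\lambda)/\hat f_X(0)$ and Taylor-expands in the \emph{phase} variable $\lambda$, writing $\Phi_{\frac{1}{\sqrt N}\otimes Z}(\lambda)=1-\tfrac{\lambda^2}{2}V_{\frac{1}{\sqrt N}\otimes Z}+\tfrac{\lambda^4}{6}R_N$ (see \eqref{TAYLOR_FONC_CAR}); the remainder $R_N$ is then controlled by a truncation-in-$\eta$ argument to obtain $\Phi_{S_N}(\lambda)\to e^{-\lambda^2 t/2}$. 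You instead expand the spherical function $\varphi_\lambda$ in the \emph{spatial} variable $\eta$, using the eigenvalue equation at the origin to read off $\varphi_\lambda''(0)=-(\lambda^2+\rho^2)/n$, and you keep the un-normalised transform. This buys you a shorter route: the full eigenvalue $\lambda^2+\rho^2$ appears at once and the limit $e^{-(\lambda^2+\rho^2)t/2}$ matches $\hat\Psi(t,\lambda)$ directly, whereas in the paper the $\rho^2$ contribution is hidden in the normalising factor $\hat f_X(0)$ (and has to be unpacked separately, as is done for $\mathcal R_{1,N}$ in the proof of Theorem~\ref{LLT}). Conversely, the paper's expansion in $\lambda$ is the natural set-up for the quantitative bounds of the local theorem. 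On the convolution side you have correctly located the crux: the paper supplies it as Proposition~\ref{PROP_CONV}, proved directly from the translation operator $T_a$ and the factorisation $e_{-\lambda,\omega}(T_{-y}(z))=e_{-\lambda,\omega}(-y)\,e_{-\lambda,T_{-y}(\omega)}(z)$, so the gyrogroup non-associativity you worry about is indeed neutralised by the radial assumption. Your final passage from spherical-side convergence to convergence on indicators is at the same level of detail as the paper's (``inversion formula \eqref{INV} and domination arguments''); just be aware that the $O(\eta^4)$ remainder in your expansion of $\varphi_\lambda$ carries a $\lambda$-dependent constant (of order $(\lambda^2+\rho^2)^2$), which is harmless for the pointwise-in-$\lambda$ convergence needed here but would require care if you pushed for uniform or quantitative statements.
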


\textcolor{black}{We are furthermore able to specify the previous result giving a convergence rate.}
\begin{theorem}[Local limit theorem] \label{LLT} Under \H{R}, there exists $C:=C(n,\mu_Z)$ s.t. for all $x\in \B^n$ and $N$ large enough
\begin{equation}\label{EQ_LLT}
 |f_{S_N}(x)-\Psi(t,x)|\le \frac{C}{N}.
\end{equation}
\end{theorem}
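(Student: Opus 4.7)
The plan is to analyse everything at the level of the spherical (Helgason--Fourier) transform $f\mapsto \hat f(\lambda)=|\S^{n-1}|\int_0^\infty f(\eta)\phi_\lambda(\eta)\sinh(\eta)^{n-1}d\eta$ for radial densities on $\B^n$, where $\phi_\lambda$ denotes the Harish--Chandra spherical function. The relevant machinery---the convolution--product property under the M\"obius convolution associated with $\oplus$, and the Plancherel/inversion formula with weight $|c(\lambda)|^{-2}d\lambda$---is developed in Section \ref{SEC_FOUR}. Under \H{R} the density $f_Z$ is radial and so is $f_{S_N}$; since the M\"obius multiplication $(1/\sqrt N)\otimes$ acts in geodesic polar coordinates as the dilation $\eta\mapsto \eta/\sqrt N$, one obtains the hyperbolic analogue of the Euclidean scaling $\widehat{X/\sqrt N}(\lambda)=\widehat X(\lambda/\sqrt N)$,
\[
\hat f_{S_N}(\lambda)=\bigl(\hat f_{\tilde Z^{(N)}}(\lambda)\bigr)^N,\qquad \hat f_{\tilde Z^{(N)}}(\lambda)=|\S^{n-1}|\int_0^\infty f_Z(\eta)\phi_\lambda(\eta/\sqrt N)\sinh(\eta)^{n-1}d\eta,
\]
with $\tilde Z^{(N)}:=(1/\sqrt N)\otimes Z$.

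The second step is a quantitative Taylor expansion of $\phi_\lambda$ at $0$. Since $\phi_\lambda$ is the unique radial eigenfunction of $\Delta_{\B^n}$ with eigenvalue $-(\lambda^2+\rho^2)$, $\rho:=(n-1)/2$, normalised by $\phi_\lambda(0)=1$, the ODE $\phi_\lambda''+(n-1)\coth(\eta)\phi_\lambda'=-(\lambda^2+\rho^2)\phi_\lambda$ forces $\phi_\lambda(\eta)=1-(\lambda^2+\rho^2)\eta^2/(2n)+O(\eta^4)$ with an integral-form fourth-order remainder whose $\lambda$-dependence is explicit. Substituting and recognising $t=n^{-1}\int\tilde\eta^2\mu_{Z,R}(d\tilde\eta)$ from Theorem \ref{CLT} produces
\[
\hat f_{\tilde Z^{(N)}}(\lambda)=1-\frac{t(\lambda^2+\rho^2)}{2N}+\frac{A_N(\lambda)}{N^2}.
\]
Raising to the $N$-th power and comparing with the spherical transform of the heat kernel, $\hat \Psi(t,\lambda)=e^{-t(\lambda^2+\rho^2)/2}$, yields
\[
\hat f_{S_N}(\lambda)-\hat\Psi(t,\lambda)=\frac{\hat\Psi(t,\lambda)\,B(\lambda)}{N}+O(1/N^2),
\]
with $B(\lambda)$ polynomial in $\lambda^2+\rho^2$. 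Plancherel inversion then gives
\[
f_{S_N}(x)-\Psi(t,x)=c_n\int_0^\infty \bigl[\hat f_{S_N}(\lambda)-\hat\Psi(t,\lambda)\bigr]\phi_\lambda(\eta_x)|c(\lambda)|^{-2}d\lambda,
\]
and the universal bound $|\phi_\lambda(\eta)|\le 1$ converts a $\lambda$-integral estimate into the uniform-in-$x$ statement \eqref{EQ_LLT}.

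The main obstacle is to control this last integral with an honest $O(1/N)$ prefactor, which forces a low/high frequency split in $\lambda$. In the low-frequency region $|\lambda|\le \lambda_N$, the Gaussian factor $\hat\Psi(t,\lambda)$ dominates the polynomial growth of $B(\lambda)|c(\lambda)|^{-2}$; here $|c(\lambda)|^{-2}\sim \lambda^{n-1}$ at infinity, with the customary parity-in-$n$ subtlety between even and odd dimensions. In the high-frequency region the Taylor expansion ceases to be valid and one must exploit the full regularity in \H{R}: since $f_Z\in C_0^\infty$, integrating by parts in the defining integral of $\hat f_{\tilde Z^{(N)}}$ and using the radial eigenvalue equation for $\phi_\lambda$ transfers arbitrary powers of $\lambda^2+\rho^2$ onto derivatives of $f_Z$, producing decay in $\lambda$ uniform in $N$. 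Balancing these two estimates---so that the $\lambda$-tail above $\lambda_N$ contributes at most $O(1/N)$ uniformly in $x$---is the technical heart of the proof; the final constant $C=C(n,\mu_Z)$ depends on finitely many seminorms of $f_Z$ and on $t$.
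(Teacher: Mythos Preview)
Your bulk argument is actually cleaner than the paper's: reading off $\phi_\lambda(\eta)=1-(\lambda^2+\rho^2)\eta^2/(2n)+O(\eta^4)$ directly from the radial ODE lets you obtain the full factor $e^{-t(\lambda^2+\rho^2)/2}$ in one stroke, whereas the paper separates the $\lambda^2$-piece (via the variance $V_{\frac1{\sqrt N}\otimes Z}$) from the $\rho^2$-piece (via a lengthy special-function computation of $N\log \hat f_{\frac1{\sqrt N}\otimes Z}(0)$). That simplification is real.

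The gap is in your tail argument. The claim that integration by parts against $\phi_\lambda$ ``transfers arbitrary powers of $\lambda^2+\rho^2$ onto derivatives of $f_Z$, producing decay in $\lambda$ uniform in $N$'' is false. The eigenvalue equation lives in the variable $\eta'=\eta/\sqrt N$, so in the $\eta$-variable the relevant operator is $N\partial_\eta^2+(n-1)\sqrt N\,\coth(\eta/\sqrt N)\partial_\eta$, whose coefficients are of size $N$ on $\supp f_Z$. Each integration by parts therefore gains $(\lambda^2+\rho^2)^{-1}$ but pays a factor $N$, yielding only $|\hat f_{\tilde Z^{(N)}}(\lambda)|\le C_k(N/\lambda^2)^k$. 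This bound is useless on the range $\lambda\in[\lambda_N,\,c\sqrt N]$: there it does not even give $|\hat f_{\tilde Z^{(N)}}|<1$, so raising to the $N$-th power gives nothing, and the weight $|\c(\lambda)|^{-2}\sim\lambda^{n-1}$ makes the integral $\sim N^{n/2}$, not $O(1/N)$. The same obstruction already appears in the Euclidean LLT: smoothness of $f_Z$ gives rapid decay of $\hat f_Z$, but $\hat f_{Z/\sqrt N}(\xi)=\hat f_Z(\xi/\sqrt N)$ decays only in $\xi/\sqrt N$.

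The paper closes this gap by a three-region split. For $\lambda\in[N^{1/4},c_0\sqrt N]$ it uses the hypergeometric series for $\varphi_\lambda$ to show $|\hat f_{\frac1{\sqrt N}\otimes Z}(\lambda)|\le 1-c\lambda^2/N$ (essentially your Taylor expansion pushed to its limit of validity, since here $\lambda\eta/\sqrt N$ stays bounded), whence $|\hat f_{S_N}(\lambda)|\le e^{-c\lambda^2}$. For $\lambda>c_0\sqrt N$ it invokes a Cram\'er-type bound $|\varphi_\lambda(\tanh r)|<1-c$ whenever $\lambda r>1$ (so each factor is bounded away from $1$ on a set of positive $\mu_{Z,R}$-measure), and combines this with Plancherel to absorb two copies of $\hat f_{\frac1{\sqrt N}\otimes Z}$ and control the $|\c(\lambda)|^{-2}$ weight, the remaining $N-2$ copies providing exponential decay in $N$. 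Either of these devices---extending your Taylor bound to $\lambda\le c_0\sqrt N$, or replacing your IBP by the Cram\'er/Plancherel argument---is needed to make the proof go through.
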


From now on we will denote by $C$ a generic constant, that may change from line to line and depends on $ n$ and the law $\mu_Z $, i.e. $C=C(n,\mu_Z)$.

\begin{remark}[About the radial densities] The radial assumption in \H{R} is somehow standard in this setting, see \cite{karp:tutu:shur:59} or Section 3.2.7 in \cite{terr:13}. It allows to focus on the asymptotic behavior of the radius to the origin for $S_N$. This isotropy makes the analysis essentially scalar. \textcolor{black}{In particular, the rotation invariance of the underlying law yields that the law of the sum does not depend on the order of the summands. It is indeed shown in \cite{ferr:ren:11} that the \textit{gyrogroup} $(\mathbb B^n,\oplus) $ is gyro-commutative, i.e. defining for $a,b,c\in \mathbb B^n$ the gyration as ${\rm gyr}[a,b]c=\ominus(a\oplus b)\oplus(a\oplus(b\oplus c)) $ then $a\oplus b={\rm gyr}[a,b](b\oplus a) $. Since the gyration ${\rm gyr} $ induces a rotation, for i.i.d. random variables $(Z^j)_{j\ge 1} $ with rotationally invariant  laws , $\bar S_N\overset{(\rm law)}{=} \oplus_{j=1}^N \frac 1N \otimes Z^{p(j)}$ (resp. $  S_N\overset{(\rm law)}{=} \oplus_{j=1}^N \frac 1{\sqrt N} \otimes Z^{p(j)}$), where $p $ is any permutation of $(1,\cdots,N) $}.
\end{remark}

\begin{remark}[About the local theorem]
The convergence rate in Theorem \ref{LLT} is due to the fact that, since we are dealing with radial densities the first moment is already zero. On the other hand one could expect that on the r.h.s. of \eqref{EQ_LLT} there should as well be an exponential decay term of the form $\exp(-cd^2(x)/t),\ d^2(x)=2\atanh(\|x\|) $ standing for the hyperbolic distance of $x$ to the origin. This would indeed correspond to the Aronson type bound for the hyperbolic heat kernel, see e.g. Molchanov \cite{molc:75} and would extend the \textit{full} local limit theorem of Batthacharya and Rao \cite{bhat:rao:76} to the current setting. One can expect this result to hold but it would require a much more involved analysis than the one in the current work.

\end{remark}

\mysection{The Poincar\'e ball: Fourier inversion and heat kernel  }
\label{SEC_FOUR}

To prove our results, as for any (local) limit theorem see e.g. \cite{bhat:rao:76} in the Euclidean case, we aim at comparing the \textit{characteristic functions} of the underlying \textit{normal} law and the one of the corresponding approximating walk.

In the current non-Euclidean setting, the \textit{characteristic function} will be expressed in terms of  the Fourier-Helgason transform, see \cite{helg:84}. 

\subsection{The Fourier-Helgason transform and its inverse}
In the following for a generic radial function $f:\B^n\rightarrow \R$ we will write with a slight abuse of notation $f(z)=f(r) $, $r=\|z\|\in [0,1) $. For  a radial function $f\in C_0^\infty(\B^n,\R) $, and $\lambda \in \R $, its Fourier-Helgason transform is given 
by the expression
\begin{equation}
\hat f(\lambda)=\int_{\B^n}f(x)\varphi_\lambda(x) \mu_{\B^n}(dx)= \Omega_{n-1}\int_0^{+\infty} (\sinh \eta)^{n-1}f( \tanh(\frac \eta 2) )\varphi_\lambda (\tanh(\frac \eta 2))d\eta,\label{TF_H}
\end{equation}
where $\Omega_{n-1}=\frac{2\pi^{\frac n2}}{\Gamma\left( \frac n2 \right)} $ denotes the area of the unit sphere $\mathbb S^{n-1} $ and the radial functions $\varphi_\lambda $ are eigen-functions of  the Laplace-Betrami operator in $\B^n $ expressed in radial coordinates.
Namely, for a generic smooth enough $\phi:\B^n\rightarrow \R $,
$$\Delta_{\B^n}\phi(z)=\Big( \frac {1-\|z\|^2}2\Big)^2\sum_{j=1}^n \partial_{z_j}^2 \phi(z)+(n-2)\frac{1-\|z\|^2}2\sum_{j=1}^n z_j\partial_{z_j}\phi(z).$$ 
When $\phi$ is radial, this operator rewrites as
\begin{equation*}
\Delta_{\B^n}\phi(r)=\partial_r^2 \phi(r)+(n-1)\coth(r)\partial_r \phi(r),
\end{equation*}
and the function $\varphi_\lambda $ in \eqref{TF_H} solves the differential equation:
 \begin{equation}
 \label{DIFF_EQ}
 \begin{cases}
 \Delta_{\B^n} \varphi_\lambda(r)+(\lambda^2+\rho^2) \varphi_\lambda(r)=0,\ \rho=\frac {n-1}2,\\
\varphi_\lambda(0)=1.
 \end{cases}
 \end{equation}
 This equation is solved using the functions
\begin{align}
e_{\lambda,\omega}(x)=\frac{(1-\|x\|^2)^{\frac 12(n-1+\textcolor{black}{i\lambda})}}{\|x-\omega\|^{n-1+\textcolor{black}{i\lambda}}},\label{DEF_Z}
\end{align}
which are  eigenfunctions of $\Delta_{\B^n} $ associated with the eigenvalue $-(\lambda^2+\rho^2) $ and therefore. Those functions actually play in the current context a similar role to the complex exponential $\exp(i x\cdot y) $ in the usual Fourier analysis on the Euclidean space $\R^n$.

We then define the corresponding elementary spherical function $\varphi_\lambda $ setting for all $x\in \B^n $,
\begin{align}
\label{EIG_SPHERE_HARMO}
\varphi_\lambda(x)=\frac{1}{\Omega_{n-1}}\int_{\S^{n-1}}e_{\lambda,\omega}(x)\Lambda (d\omega),
\end{align}
where $\Lambda $ stands for the Lebesgue measure on the unit sphere $\S^{n-1} $. It is then clear that $\varphi_\lambda $ is also an eigenfunction of $\Delta_{\B^n} $ with eigenvalue $-(\lambda^2+\rho^2) $ and that $\varphi_\lambda(0)=1 $. Hence, the above $\varphi_\lambda $ indeed solves \eqref{DIFF_EQ}.

 Note that its expression also writes in geodesic polar coordinates (see e.g. \cite{anke:17}):
 \begin{align}\label{EIG_FUNC_GEO}
 \varphi_\lambda (\tanh(\frac \eta 2))=
 \tilde C_n\sinh(\eta)^{2-n}\int_0^\eta ds(\cosh(\eta)-\cosh(s))^{\frac {n-3}2} \cos(\lambda s),\ 
\tilde C_n =
\frac{2^{\frac {n-1}2}\Gamma\left(\frac n2\right)}{\sqrt \pi \Gamma\left(\frac{n-1}2\right)}.
 \end{align}
 For computational purposes we also make the following link with the Legendre special function, see Gradstein and Ryzhik \cite{grad:ryzh:07}, p. 1016, formula 8.715(1),
 \begin{align}
 P_{-\frac 12+i\lambda}^{1-\frac n2}(\cosh(\eta))&=\sqrt{\frac 2\pi}\frac{\sinh(\eta)^{1-\frac n2}}{\Gamma(\rho)}\int_0^\eta ds(\cosh(\eta)-\cosh(s))^{\frac {n-3}2} \cos(\lambda s),\notag\\
 \varphi_\lambda (\tanh(\frac \eta 2))&=2^{\rho-\frac 12}\Gamma(\rho+\frac 12)\sinh(\eta)^{\frac 12-\rho}P_{-\frac 12+i\lambda}^{\frac 12-\rho}(\cosh(\eta)). \label{CORRESP_PHI_LEGENDRE}
 \end{align}

 The inversion formula associated with \eqref{TF_H}, for a radial function $f\in C_0^\infty(\B^n,\R)$, reads for all $\|z\|=r\in [0,1) $,
 \begin{equation}
\label{INV}
f(z)=f(r)= C_n\int_{0}^{+\infty} d\lambda |\c(\lambda)|^{-2}\hat f(\lambda ) \varphi_\lambda (r),
 \end{equation}
 where $\c(\lambda) $ is the generalized Harish-Chandra function:
 \begin{equation}\label{HC}
 \c(\lambda)=\frac{2^{3-n-i\lambda}\Gamma(\frac n2)\Gamma(i\lambda)}{\Gamma\left( \frac{n-1+i\lambda}2\right)\Gamma\left( \frac{1+i\lambda}2\right)},
 \end{equation}
 and $C_n=\frac1{2^{n-3} \pi\Omega_{n-1}} $. Formulas \eqref{TF_H} and \eqref{INV} can be derived from Liu and Peng \cite{liu:peng:09} (taking therein $\vartheta=2-n $)
 or Ferreira \cite{ferr:15} (taking therein $\sigma=2-n, t=1$). 
 \subsection{The heat kernel on $\B^n$.}\label{HK_SEC}
 The heat kernel will provide the limit law which is somehow the analogue in the current non-Euclidean setting of the normal law. The normal density of parameter $t>0$ in $\B^n $ is defined as the solution to
 \begin{equation*}
\frac 12 \Delta_{\B^n}\Psi (t,x)=\partial_t \Psi (t,x), \ \textcolor{black}{\Psi(0,\cdot)=\delta(\cdot)}.
 \end{equation*}
 In the literature the usual heat equation considered is
  \begin{equation*}
 \Delta_{\B^n}\psi (t,x)=\partial_t \psi (t,x), \ \textcolor{black}{\psi(0,\cdot)=\delta(\cdot)}.
 \end{equation*}
 It can actually be solved through the Fourier-Helgason transform (assuming the solution is radial). Namely, from \eqref{TF_H}, one derives that for all $\lambda\in \R $:
 \begin{equation}\label{TF_HK_1}
\hat \psi(t,\lambda)=\exp(-(\lambda^2+\rho^2)t), \ \rho=\frac{n-1}2.
 \end{equation}
 Observe that the expression in \eqref{TF_HK_1} in particular coincides when $n=2,3 $ with the one obtained in \cite{karp:tutu:shur:59}
 . 
 With the terminology of this work the expression in \eqref{TF_HK_1} corresponds to the so-called characteristic functions of the first kind. In order to match the Euclidean probabilistic set-up we will consider the characteristic functions of the second kind,
 \begin{equation}\label{TF_HK_2}
\hat \psi_2(t,\lambda):=\frac{\hat \psi(t,\lambda)}{\hat \psi(t,0)}=\exp(-\lambda^2t),
 \end{equation} 
 so that in particular $\hat \psi_2(t,0)=1 $. By inversion it follows that for $m\in \N$:
 \begin{equation}\label{HK_H}
\psi(t,x)=\psi(t,\atanh(\frac \eta2))=\begin{cases}\frac{\exp(-\frac{m^2 t}{2})}{(2\pi)^m\sqrt{2\pi t }} \Big(-\frac 1{\sinh \eta} \partial_\eta \Big)^m\exp\left(-\frac{\eta^2}{2t} \right),\ n=2m+1,\\
\frac{\exp(-\frac{(m-\frac 12)^2 t}{2})}{(2\pi)^m\sqrt{\pi t }}\int_{\eta}^{+\infty} \frac{ds}{\sqrt{\cosh(s)-\cosh(\eta)}}(-\partial_s)\Big(-\frac 1{\sinh s} \partial_s \Big)^{m-1}\exp\left(-\frac{s^2}{2t} \right),\ n=2m.
\end{cases}
 \end{equation}
 Namely, the heat kernel on $ \mathbb H^n$ is expressed in term of the hyperbolic distance to the origin. We can refer to \cite{anke:17} for the derivation of \eqref{HK_H} through the Abel transform and its inverse or to \cite{grig:nogu:97} in which the authors derive the heat kernel through the fundamental solution of the wave equation.
 
 From the previous definitions it is clear that for $\Psi $ as in Theorem \ref{CLT}, $\Psi(t,x)=\psi(\frac t2,x) $.
 
 \subsection{Some additional tools from harmonic analysis on $\B^n$}
 To establish our main theorems we will need some additional tools. Namely, we need  to define the convolution on $\B^n$.
 To this end, following Ahlfors \cite{ahlf:81} we define for fixed $a\in \B^n$ the translation operator,
 \begin{equation*}
T_a: x\in \B^n \mapsto T_a(x)=-a\oplus x\in \B^n.
 \end{equation*}
 The previous mapping is bijective, it is easily checked that $T_a^{-1}=T_{-a} $, and has the next important properties.
 \begin{proposition}[Some properties of $T_a $, cf. \cite{ahlf:81}, pp. 18--30]\label{PROP_TA}
 For a fixed $a$ and $x\in \B^n$ it holds that:
 \begin{trivlist}
\item[1.] $$1-\|T_a(x)\|^2 =\frac{(1-\|a\|^2)(1-\|x\|^2)}{1-2\langle x,a\rangle+\|x\|^2\|a\|^2}.$$
\item[2.] $$ \det(D_xT_a(x))=\Bigg(\frac{1-\|a\|^2}{1-2\langle x,a\rangle+\|x\|^2\|a\|^2}\Bigg)^n,\ \vvvert D_xT_a(x)\vvvert=\frac{1-\|a\|^2}{1-2\langle x,a\rangle+\|x\|^2\|a\|^2},$$
where $\vvvert\cdot \vvvert $ denotes the spectral norm.
\item[3.] $T_a$ preserves the Riemann measure. Namely,
$$\frac{\vvvert D_xT_a(x)\vvvert}{1-\|T_a(x)\|^2} =\frac{1}{1-\|x\|^2}.$$ 
\item[4.] $T_a(x)=-\frac{D_x T_a(x)}{\vvvert D_xT_a(x)\vvvert} T_x(a) $. In particular, for a radial function $f:\B^n\rightarrow \R $, it holds that
$$f(T_a(x))=f(T_x(a)).$$
 \end{trivlist}
 \end{proposition}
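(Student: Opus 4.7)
The plan is to establish all four properties by direct computation starting from the explicit form of $T_a$. Substituting $\ominus a=-a$ into \eqref{Mobius_ADD} yields
$$T_a(x)=\frac{-(1-2\langle a,x\rangle+\|x\|^2)\,a+(1-\|a\|^2)\,x}{D(x)},\qquad D(x):=1-2\langle a,x\rangle+\|a\|^2\|x\|^2.$$
This formula, in which $D(x)$ is manifestly symmetric in $a$ and $x$, is the central object, and the four assertions follow essentially by algebra.

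For property 1, I would compute $\|T_a(x)\|^2$ by expanding the squared norm of the numerator. Grouping terms, the quantity $D(x)^2-\|\text{num}(x)\|^2$ should collapse to $(1-\|a\|^2)(1-\|x\|^2)D(x)$ after the cross terms in $\langle a,x\rangle$ cancel, which upon division by $D(x)^2$ gives the stated identity.

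For property 2, I would apply the quotient rule to $T_a(x)=N(x)/D(x)$, writing $N(x)=-\alpha(x)a+(1-\|a\|^2)x$ with $\alpha(x)=1-2\langle a,x\rangle+\|x\|^2$, so that
$$D_xT_a(x)=\frac{(1-\|a\|^2)I+2a(a-x)^\top}{D(x)}-\frac{N(x)\,\nabla D(x)^\top}{D(x)^2}.$$
The crucial structural fact, expressing that the Möbius translation is conformal, is that this matrix factors as $\frac{1-\|a\|^2}{D(x)}M(x)$ with $M(x)\in O(n)$. Verifying this factorization directly from the above expression is the main obstacle in the proof: one must check that $M(x)^\top M(x)=I$ by a careful expansion using $\|N(x)\|^2=D(x)^2-(1-\|a\|^2)(1-\|x\|^2)D(x)$ from property 1. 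Once the conformal form is in hand, the determinant and spectral norm claims in property 2 are immediate. Property 3 then follows by dividing the spectral norm from property 2 by $1-\|T_a(x)\|^2$ from property 1, the factors $(1-\|a\|^2)/D(x)$ canceling to leave $1/(1-\|x\|^2)$.

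For property 4, note that the same denominator $D(x)$ appears in $T_x(a)$ by the symmetry of $D$. Property 1 applied symmetrically gives $\|T_a(x)\|=\|T_x(a)\|$, which immediately yields $f(T_a(x))=f(T_x(a))$ for any radial $f$. To obtain the stronger vector identity $T_a(x)=-\frac{D_xT_a(x)}{\vvvert D_xT_a(x)\vvvert}T_x(a)$, I would compare the explicit numerators of $T_a(x)$ and $-T_x(a)$; their difference is the action of the orthogonal factor $M(x)$ from property 2. Alternatively, all of this is contained in Ahlfors \cite{ahlf:81}, pp.~18--30, and may simply be invoked.
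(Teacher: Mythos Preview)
The paper does not actually supply a proof of this proposition: it is stated with the reference ``cf.\ \cite{ahlf:81}, pp.~18--30'' in its heading and is used thereafter without further justification. Your closing remark, that one may simply invoke Ahlfors, is therefore exactly the paper's own ``proof''.

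That said, your computational sketch is correct and is essentially the argument one finds in \cite{ahlf:81}. The key structural point you identify---that $D_xT_a(x)=\dfrac{1-\|a\|^2}{D(x)}\,M(x)$ with $M(x)\in O(n)$, i.e.\ conformality of the M\"obius translation---is precisely what drives properties 2--4, and property 1 is the standard algebraic identity. One small comment on property 4: your observation that $D(x)$ is symmetric in $a$ and $x$, combined with property 1, already gives $\|T_a(x)\|=\|T_x(a)\|$ and hence $f(T_a(x))=f(T_x(a))$ for radial $f$; this is in fact the only part of property 4 that the paper subsequently uses (in the proof of Proposition \ref{PROP_CONV}), so the more delicate vector identity need not be fully verified for the paper's purposes.
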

 The translation operator allows to define in a quite natural way the convolution. Namely, for  $f,g\in C_0^\infty(\B^n,\R) $, we set:
 \begin{equation}\label{DEF_CONV}
 \forall x\in \B^n,\ f*g(x)=\int_{\B^n}f(-y\oplus x)g(y)\mu_{\B^n}(dy)=\int_{\B^n}f( T_y(x))g(y)\mu_{\B^n}(dy).
 \end{equation}
This definition enlarges the one in \cite{karp:tutu:shur:59} to the current multi-dimensional setting.
 We now state some very useful properties of the convolution in order to prove limit theorems
  \begin{proposition}[Some properties of the convolution]\label{PROP_CONV}
Let   $f,g\in C_0^\infty(\B^n,\R) $ and be radial functions. The following properties hold:
\begin{trivlist}
\item[-] Commutativity: for the definition in \eqref{DEF_CONV} it holds that 
$$f*g(x)=g*f(x). $$
\item[-] Stability of the radial property through convolution :  $f*g $ is a radial function. 

\item[-] Fourier-Helgason transform of the convolution:  it holds with  the definition in \eqref{TF_H} that for all $\lambda\in \R $,
$$\widehat {f*g}(\lambda)=\hat f(\lambda) \hat g (\lambda).$$
\end{trivlist}
 \end{proposition}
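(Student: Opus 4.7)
The natural plan is to establish the three items in the order (2), (3), (1), so that commutativity will fall out as a corollary of the Fourier--Helgason identity together with inversion.

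Item (2) rests on the rotation equivariance of the Möbius addition. From the explicit formula \eqref{Mobius_ADD}, and using $\langle Ru,Rv\rangle=\langle u,v\rangle$ and $\|Ru\|=\|u\|$ for every $R\in SO(n)$, one reads off at once that $R(u\oplus v)=Ru\oplus Rv$, and hence $T_y(Rx)=R\,T_{R^{-1}y}(x)$. Radiality of $f$ erases the external rotation, and a change of variable $z=R^{-1}y$ (under which the rotation-invariant measure $\mu_{\B^n}$, see \eqref{VOLUME}, is preserved) combined with radiality of $g$ yields $f*g(Rx)=f*g(x)$.

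For item (3), one starts from \eqref{TF_H} and \eqref{DEF_CONV}, applies Fubini, and in the inner integral performs the substitution $z=T_y(x)$ whose Jacobian preserves $\mu_{\B^n}$ by Proposition~\ref{PROP_TA}(3). This leads to
\[
\widehat{f*g}(\lambda)=\int_{\B^n}g(y)\int_{\B^n}\varphi_\lambda(T_{-y}(z))\,f(z)\,\mu_{\B^n}(dz)\,\mu_{\B^n}(dy).
\]
Because $f$ is radial one may freely average the inner integrand over $R\in SO(n)$ against the normalized Haar measure, so the task reduces to proving the classical \emph{product formula} for spherical functions,
\[
\int_{SO(n)}\varphi_\lambda(-y\oplus Rz)\,dR=\varphi_\lambda(y)\,\varphi_\lambda(z),
\]
which characterizes the elementary spherical functions of the Gelfand pair underlying $\B^n$. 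This can be checked directly from the Poisson-kernel representation \eqref{EIG_SPHERE_HARMO} by expressing $e_{\lambda,\omega}(-y\oplus Rz)$ through the transformation rules of the horospherical kernels under Möbius translation, then averaging first in $R$ and afterwards in $\omega\in\S^{n-1}$. Inserting the product formula and using radiality of $\varphi_\lambda$ (so that $\varphi_\lambda(-y)=\varphi_\lambda(y)$) produces $\widehat{f*g}(\lambda)=\hat{f}(\lambda)\hat{g}(\lambda)$.

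Item (1) is then immediate: by item (2) both $f*g$ and $g*f$ are radial, and by item (3) their Fourier--Helgason transforms both equal $\hat f(\lambda)\hat g(\lambda)$; the inversion formula \eqref{INV} applied to their radial difference forces $f*g=g*f$. The main obstacle I anticipate is the justification of the spherical-function product formula: although classical, it requires a careful bookkeeping of the joint action of a Möbius translation $T_{-y}$ and a rotation $R$ on the boundary sphere $\S^{n-1}$ underlying the kernels $e_{\lambda,\omega}$. The remaining ingredients — rotation equivariance of $\oplus$, the change of variable $T_y$, Fubini, and the rotation-invariance of $\mu_{\B^n}$ — are essentially direct consequences of Proposition~\ref{PROP_TA} and \eqref{VOLUME}.
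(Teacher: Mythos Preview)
Your proposal is correct, but the route differs from the paper's in two respects.

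\textbf{Commutativity.} The paper proves item~(1) \emph{first} and \emph{directly}, without Fourier inversion. It uses point~4 of Proposition~\ref{PROP_TA}, namely $f(T_y(x))=f(T_x(y))$ for radial $f$, then the substitution $z=T_x(y)$ (measure-preserving by point~3), the identity $x\oplus z=-[(-x)\oplus(-z)]$, and evenness of radial functions. Your deduction via items~(2)--(3) and the inversion formula \eqref{INV} is also valid (since $f*g\in C_0^\infty$ and is radial), but it is logically heavier: you are invoking the full spherical inversion theorem to obtain a fact that follows from a two-line change of variables.

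\textbf{The Fourier--Helgason identity.} Both arguments start with Fubini and the substitution $z=T_y(x)$, arriving at the inner integral $\int f(z)\,\varphi_{-\lambda}(T_{-y}(z))\,\mu_{\B^n}(dz)$. From here the paper avoids the $SO(n)$-averaging and the abstract product formula altogether. Instead it expands $\varphi_{-\lambda}(T_{-y}(z))$ via \eqref{EIG_SPHERE_HARMO} and applies the concrete multiplicative identity for the horospherical kernels (Lemma~4.3 of \cite{liu:peng:09}),
\[
e_{-\lambda,\omega}(T_{-y}(z))=e_{-\lambda,\omega}(-y)\,e_{-\lambda,\,T_{-y}(\omega)}(z),
\]
and then uses the elementary observation that for a \emph{radial} $f$ the general Fourier--Helgason transform $\int f(z)\,e_{-\lambda,\zeta}(z)\,\mu_{\B^n}(dz)$ is independent of $\zeta\in\S^{n-1}$. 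This immediately factors the double integral into $\hat f(\lambda)\hat g(\lambda)$. Your route through the product formula $\int_{SO(n)}\varphi_\lambda(-y\oplus Rz)\,dR=\varphi_\lambda(y)\varphi_\lambda(z)$ is the classical Gelfand-pair argument and is perfectly legitimate; indeed the Liu--Peng identity above is precisely the ``transformation rule of the horospherical kernels'' you allude to, so the two proofs share the same analytic core. The paper's version is more self-contained within the ball-model formalism and sidesteps the Haar-averaging bookkeeping you flag as the main obstacle, at the cost of importing one lemma from \cite{liu:peng:09}.
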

 \begin{proof}Since $f$ is radial, we have from point 4 of Proposition \ref{PROP_TA} that $f(T_y(x))=f(T_x(y)) $.
   Setting $z=T_x(y) =-x\oplus y\iff x\oplus z =T_{-x}(z)=y$ and using point 3 of Proposition \ref{PROP_TA},
   we derive:
 \begin{align*}
f*g(x)=&\int_{\B^n}f(T_y(x))g(y)\frac{dy}{(1-\|y\|^2)^n}=\int_{\B^n}f(z)g(T_{-x}(z))\frac{dz}{(1-\|z\|^2)^n}\\
\underset{x\oplus z=-[(-x)\oplus (-z)]}{=}&\int_{\B^n}f(z)g(T_{x}(-z))\frac{dz}{(1-\|z\|^2)^n}\\
\underset{\tilde z=-z}{=}&\int_{\B^n}f(\tilde z)g(T_{x}(\tilde z))\frac{d\tilde z}{(1-\|\tilde z\|^2)^n}=\int_{\B^n}f(\tilde z)g(T_{\tilde z}(x))\frac{d\tilde z}{(1-\|\tilde z\|^2)^n}
=g*f(x),
 \end{align*}
where we also used that \textcolor{black}{$f,g$ are} radial (and therefore even), for the equality in the second \textcolor{black}{and third lines}.
 This proves the first point.
 
 Let us now turn to the second point. It suffices to prove that for any orthogonal matrix $A\in O(n)$ and $x\in \B^n$, it actually holds that
 \begin{align*}
 f*g(Ax)=f(x).
 \end{align*}
 Observe first from \eqref{Mobius_ADD} that for all $a,b\in \B^n $
 \begin{align}
 \label{ORTHO_IN_MOBIUS} A(A^T \textcolor{black}{a\oplus b})=a\oplus Ab.
 \end{align}
Indeed,
\begin{align*}
A(A^T a\oplus b)=&A\frac{(1+2\langle A^Ta,b\rangle+\|b\|^2)A^Ta+(1-\|A^Ta\|^2)b}{1+2\langle A^Ta,b\rangle+\|A^Ta\|^2\|b\|^2}=\frac{(1+2\langle a,Ab\rangle+\|Ab\|^2)a+(1-\|a\|^2)Ab}{1+2\langle a,Ab\rangle+\|a\|^2\|Ab\|^2}\\
=&	a\oplus Ab.
\end{align*}
Write now,
\begin{align*}
f*g(Ax)=&\int_{\B^n} f(y)g(-y\oplus Ax) \mu_{\B^n}(dy)\underset{\eqref{ORTHO_IN_MOBIUS}}{=}\int_{\B^n}f(y)g(A(A^T(-y)\oplus x))\mu_{\B^n}(dy)\\
=&\int_{\B^n}f(y)g(-(A^Ty)\oplus x))\mu_{\B^n}(dy)=\int_{\B^n}f(y)g(T_{A^Ty}(x))\mu_{\B^n}(dy)=\int_{\B^n}f(y)g(T_{x}(A^Ty))\mu_{\B^n}(dy)\\
=&\int_{\B^n}f(Ay)g(T_{x}(y))\mu_{\B^n}(dy)=\int_{\B^n}f(y)g(T_{y}(x))\mu_{\B^n}(dy)=f*g(x),
\end{align*}
where we used that $f,g$ are radial \textcolor{black}{and property 3 of Proposition \ref{PROP_TA}}. This proves the second point.

Let us eventually prove the property concerning the Fourier-Helgason transform of the convolution. From \eqref{TF_H} write for all $\lambda\in \R $:
\begin{align*}
\widehat {f*g}(\lambda)=&\int_{\B^n}\Big(\int_{\B^n} f(T_y(x)) g(y)\mu_{\B^n}(dy) \Big) \varphi_{-\lambda}(x)\mu_{\B^n}(dx) =\int_{\B^n} g(y)\Bigg(\int_{\B^n}f(T_y(x))\varphi_{-\lambda}(x)\mu_{\B^n}(dx)\Bigg) \mu_{\B^n}(dy)\\
=&\int_{\B^n} g(y)\Bigg(\int_{\B^n}f(z)\varphi_{-\lambda}(T_{-y}(z))\mu_{\B^n}(dz)\Bigg) \mu_{\B^n}(dy).
\end{align*}
Going back to the definition \eqref{EIG_SPHERE_HARMO} and \eqref{DEF_Z} we derive:
\begin{align*}
\varphi_{-\lambda}(T_{-y}(z))=\frac{1}{\Omega_{n-1}}\int_{\S^{n-1}} e_{-\lambda,\omega}(T_{-y}(z))\Lambda(d\omega).
\end{align*}
Now, it holds from Lemma 4.3 in \cite{liu:peng:09} that 
$$e_{-\lambda,\omega}(T_{-y}(z))=e_{-\lambda,\omega}(-y)e_{-\lambda,T_{-y}(\omega)}(z). $$
We point out that for $\omega\in \S^{n-1} $ it indeed holds from \textcolor{black}{Property 1. of Proposition \ref{PROP_TA}} that $T_{-y}(\omega)\in \S^{n-1} $.

Thus,
\begin{align}
\widehat {f*g}(\lambda)&=\int_{\B^n} g(y)\Bigg(\int_{\B^n}f(z)\frac{1}{\Omega_{n-1}}\int_{\S^{n-1}} e_{-\lambda, \omega}(-y)e_{-\lambda,T_{-y}(\omega)}(z)\Lambda(d\omega)\mu_{\B^n}(dz)\Bigg) \mu_{\B^n}(dy)\notag\\
&=\frac{1}{\Omega_{n-1}}\int_{\S^{n-1}}\int_{\B^n} g(y)e_{-\lambda, \omega}(-y)\left(\int_{\B^n} f(z) e_{-\lambda,T_{-y}(\omega)}(z)\mu_{\B^n}(dz)\right) \mu_{\B^n}(dy)\Lambda(d\omega).\label{PREAL_PC}
\end{align}
We now recall that for possibly non-radial functions, $h\in C_0^\infty(\B^n)$ the \textit{general} Fourier-Helgason transform has two arguments (the polar coordinates of the phase variable) and writes for  $\lambda\in \R,\ \zeta \in \S^{n-1} $:
$$\hat h(\lambda,\zeta)=\int_{\B^n} h(x) e_{-\lambda, \zeta}(x)\mu_{\B^n}(dx).$$
When $h$ is radial we derive passing to polar coordinates that  $\hat h(\lambda,\zeta)=\hat h(\lambda)$ so that
$$ \Omega_{n-1 }\hat h(\lambda)=\int_{\B^n} h(x) \int_{\S^{n-1}}e_{-\lambda, \zeta}(x)\Lambda(d\zeta)\mu_{\B^n}(dx),$$
from which we derive 
$$ \hat h(\lambda)=\int_{\B^n} h(x) e_{-\lambda, \zeta}(x)\mu_{\B^n}(dx), \forall \zeta \in \S^{n-1},$$
i.e. the Fourier transform does not depend on the considered point \textcolor{black}{on the sphere $\S^{n-1}$}. Taking $h=f$, we therefore deduce from \eqref{PREAL_PC} that
$$\widehat {f*g}(\lambda)=\frac{1}{\Omega_{n-1}}\int_{\S^{n-1}}\int_{\B^n} g(y)e_{-\lambda, \omega}(-y) \hat f(\lambda) \mu_{\B^n}(dy)\Lambda(d\omega)=\hat g(\lambda) \hat f(\lambda),$$
which gives the third point and concludes the proof of the proposition.
 \end{proof}

  \subsection{Non Euclidean mean, variance ans scaling}
We define, coherently with the Euclidean case, the mean and variance associated with a $\B^n $-valued random variable $Z$ defined on some probability space $(\Omega,\F,\P)$ satisfying assumption \H{R}.

The \textit{analogue} of the characteristic function (of the second kind with the terminology of \cite{karp:tutu:shur:59}) writes:
\begin{equation}
\label{CAR_FUNC}
\forall \lambda \in \R,\ \Phi_Z(\lambda)=\frac{\hat f_Z(\lambda)}{\hat f_Z(0)}.
\end{equation}
Again the normalization guarantees that $\Phi_Z(0)=1 $ as in the Euclidean case. We also emphasize that, since we assumed the density to be radial, it follows from \eqref{TF_H} and \eqref{EIG_FUNC_GEO} that for all $m=2j+1, j\in \N$,
\begin{equation}\label{0_ODD_MOMENTS}
\partial_\lambda^m \Phi_Z(\lambda)|_{\lambda=0}=0.
 \end{equation}
 In other terms, the \textit{odd} moments of the random variable are 0.

From the above definition we then define the \textit{analogue} of the variance as
\begin{equation}\label{DEF_VAR}
V_Z=-\partial_\lambda^2 \Phi_Z(\lambda)|_{\lambda=0}.
\end{equation}
\begin{proposition}[Variance of the sum]
Let $Z_1$ and $Z_2$ be two $\B^n $-valued independent random variables with radial densities $f_{Z_1},f_{Z_2}\in C_0^\infty(\B^n,\R)$ w.r.t. the Riemannian volume \eqref{VOLUME} of $\B^n$. It then holds that
$$
V_{Z_1\oplus Z_2}=V_{Z_1}+V_{Z_2}.$$
\end{proposition}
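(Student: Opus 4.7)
The plan is to reduce everything to the multiplicativity of the Fourier--Helgason transform under the non--Euclidean convolution \eqref{DEF_CONV}, combined with the vanishing of the first derivative of $\Phi_Z$ at $\lambda=0$ from \eqref{0_ODD_MOMENTS}.

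First I would observe that since $Z_1$ and $Z_2$ are independent with radial densities and the density of $Z_1\oplus Z_2$ is obtained as the $\B^n$--convolution $f_{Z_1\oplus Z_2}=f_{Z_1}*f_{Z_2}$ (this follows from the definition \eqref{DEF_CONV} of $*$ together with the translation invariance of the Riemannian volume stated in Proposition \ref{PROP_TA}(3); the radiality of $f_{Z_1}*f_{Z_2}$ is granted by Proposition \ref{PROP_CONV}). Then, applying the Fourier--Helgason identity from Proposition \ref{PROP_CONV},
\begin{equation*}
\hat f_{Z_1\oplus Z_2}(\lambda)=\hat f_{Z_1}(\lambda)\,\hat f_{Z_2}(\lambda),\qquad \lambda\in\R.
\end{equation*}

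Dividing by $\hat f_{Z_1\oplus Z_2}(0)=\hat f_{Z_1}(0)\,\hat f_{Z_2}(0)$ and using the definition \eqref{CAR_FUNC} of the characteristic function of the second kind yields the product formula
\begin{equation*}
\Phi_{Z_1\oplus Z_2}(\lambda)=\Phi_{Z_1}(\lambda)\,\Phi_{Z_2}(\lambda).
\end{equation*}
Differentiating twice in $\lambda$ and using the Leibniz rule gives
\begin{equation*}
\partial_\lambda^2 \Phi_{Z_1\oplus Z_2}=\Phi_{Z_1}''\Phi_{Z_2}+2\Phi_{Z_1}'\Phi_{Z_2}'+\Phi_{Z_1}\Phi_{Z_2}''.
\end{equation*}
I then evaluate at $\lambda=0$: by construction $\Phi_{Z_i}(0)=1$, and by \eqref{0_ODD_MOMENTS} (applied with $m=1$) we have $\Phi_{Z_i}'(0)=0$. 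Therefore
\begin{equation*}
\partial_\lambda^2 \Phi_{Z_1\oplus Z_2}(\lambda)\big|_{\lambda=0}=\Phi_{Z_1}''(0)+\Phi_{Z_2}''(0),
\end{equation*}
which, after multiplication by $-1$ in view of \eqref{DEF_VAR}, gives the claimed additivity $V_{Z_1\oplus Z_2}=V_{Z_1}+V_{Z_2}$.

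The only non--trivial ingredients are the two already proved facts, namely the convolutional structure of the density of a sum (via Proposition \ref{PROP_TA}(3)) and the multiplicativity of the Fourier--Helgason transform on radial functions (Proposition \ref{PROP_CONV}); once these are in place the computation is just the usual two--line Euclidean argument transplanted to $\B^n$. The most delicate technical point, which is in fact absorbed by the cited propositions, is the fact that the radiality of $f_{Z_i}$ is preserved through the non--commutative Möbius addition so that the scalar transform \eqref{TF_H} and identity \eqref{0_ODD_MOMENTS} can be applied unchanged.
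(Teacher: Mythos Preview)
Your proof is correct and follows essentially the same approach as the paper: both identify $f_{Z_1\oplus Z_2}=f_{Z_1}*f_{Z_2}$, invoke the multiplicativity of the Fourier--Helgason transform from Proposition~\ref{PROP_CONV}, differentiate the product twice and use the vanishing of the first derivative at $\lambda=0$ from \eqref{0_ODD_MOMENTS}. The only cosmetic difference is that you first pass to the normalized functions $\Phi_{Z_i}$ before differentiating, whereas the paper differentiates $\hat f_{Z_1}\hat f_{Z_2}$ directly and divides by $\hat f_{Z_1}(0)\hat f_{Z_2}(0)$ at the end.
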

\begin{proof}
It is clear from \eqref{DEF_CONV} and Proposition \ref{PROP_CONV} that $Z_1\oplus Z_2$ has a radial density $f_{Z_1\oplus Z_2} $ w.r.t. $\mu_{\B^n} $ and that
$$f_{Z_1\oplus Z_2}=f_{Z_1}*f_{Z_2},\ \hat f_{Z_1\oplus Z_2}=\hat f_{Z_1} \hat f_{Z_2}.$$
Hence, from \eqref{0_ODD_MOMENTS}-\eqref{DEF_VAR},
\begin{align*}
V_{Z_1\oplus Z_2}=&\frac{-\partial_\lambda^2 \Big(\hat f_{Z_1}(\lambda)\Big)|_{\lambda=0}\hat f_{Z_2}(0)- 2\Big(\partial_\lambda\hat f_{Z_1}(\lambda)\partial_\lambda\hat f_{Z_2}(\lambda)\Big)|_{\lambda=0}- \hat f_{Z_1}(0)\partial_\lambda^2\Big(\hat f_{Z_2}(\lambda)\Big)|_{\lambda=0}}{\hat f_{Z_1}(0)\hat f_{Z_2}(0)}\\
&=-\frac{\partial_\lambda^2 \Big(\hat f_{Z_1}(\lambda)\Big)_{\lambda=0}}{\hat f_{Z_1}(0)}-\frac{\partial_\lambda^2 \Big(\hat f_{Z_2}(\lambda)\Big)_{\lambda=0}}{\hat f_{Z_2}(0)}=V_{Z_1}+V_{Z_2}.
\end{align*}

\end{proof}

In order to make a connection with the usual normalizing rate in the CLT, in \cite{karp:tutu:shur:59} the authors introduce the following invariance property on the radial measures.  Let $Z$ be a $\B^n $-valued random variable
with density $f_Z\in C_0^\infty(\B^n) $ and denote again by $\mu_{Z,R} $ the induced measure in geodesic coordinates. For a parameter $\varepsilon>0 $, the authors consider scaled variables $ Z_\varepsilon $ \textcolor{black}{(associated with $Z$)} for which they assume that the corresponding measure $\mu_{ Z_\varepsilon, R} $ \textcolor{black}{in radial geodesic coordinates} satisfies for all $\tau>0 $:
\begin{align}\label{SCALE_KTS}
\mu_{ Z_\varepsilon,R}\Big(\Big\{ \eta \in \R_+: \eta\le \tau\varepsilon\Big\} \Big)=\mu_{ Z,R}\Big(\Big\{ \eta \in \R_+: \eta\le \tau\Big\} \Big).
\end{align}

\begin{proposition}[Scaled Variables]\label{SCALED_KTS}
In order to fulfill property \eqref{SCALE_KTS}, the density $f_{ Z_\varepsilon} $ must be chosen such that, in geodesic polar coordinates:
\begin{equation}\label{SCALED_DENS_PROP}
\forall \eta\in (0,+\infty),\ f_{ Z_\varepsilon}(\tanh(\frac \eta 2))=\frac 1\varepsilon f_Z(\tanh(\frac{\eta}{2\varepsilon}))\Big(\frac{\sinh( \frac {\eta}{\varepsilon})}{\sinh ( \eta)}\Big)^{n-1}.
\end{equation}
Observe in particular that if ${\rm supp}(f_Z)\in B(0,R) $ then ${\rm supp }(f_{ Z_\varepsilon})\in B(0,R\varepsilon) $. Furthermore, for a bounded measurable function $g$ (not necessarily radial):
\begin{align}
\E[g( Z_\varepsilon)]&
=\E\Big[g\Big(\tanh(\varepsilon \atanh(\|Z\|) ) \frac{Z}{\|Z\|} \Big)\Big].\label{FUNC_MEAN_SCALED}
\end{align}

Importantly, from the definition of the M\"obius multiplication in \eqref{MULT_MOBIUS}, it holds that the  property \eqref{SCALE_KTS} holds if and only if:
$$ Z_\varepsilon \overset{({\rm law})}{=} \varepsilon \otimes Z.$$

\end{proposition}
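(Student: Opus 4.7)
The plan is a purely computational change-of-variable argument, using the expression \eqref{RADIAL_GE0_MEAS} of the Riemannian volume in geodesic coordinates and the definition \eqref{MULT_MOBIUS} of the M\"obius multiplication. Since $f_Z$ is radial, \eqref{RADIAL_GE0_MEAS} gives the induced radial measure
$$\mu_{Z,R}(d\eta)=\Omega_{n-1}\, f_Z(\tanh(\eta/2))\,\sinh(\eta)^{n-1}\,d\eta,$$
and similarly for $\mu_{Z_\varepsilon,R}$. Inserting both expressions into \eqref{SCALE_KTS} gives, for every $\tau>0$,
$$\int_0^{\tau\varepsilon}\! f_{Z_\varepsilon}(\tanh(\eta/2))\sinh(\eta)^{n-1}d\eta=\int_0^{\tau}\! f_Z(\tanh(\eta/2))\sinh(\eta)^{n-1}d\eta.$$
Differentiating in $\tau$ and setting $\eta=\tau\varepsilon$ produces exactly \eqref{SCALED_DENS_PROP}, and, conversely, integrating back shows that \eqref{SCALED_DENS_PROP} implies \eqref{SCALE_KTS}. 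The support claim is then immediate from \eqref{SCALED_DENS_PROP}: its right-hand side vanishes as soon as $\tanh(\eta/(2\varepsilon))>R$, i.e. once the geodesic radius $\eta$ exceeds $2\varepsilon\atanh(R)$, which is of order $\varepsilon$.

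For the integral formula \eqref{FUNC_MEAN_SCALED}, I would pass to geodesic polar coordinates:
$$\E[g(Z_\varepsilon)]=\int_0^{+\infty}\!\int_{\S^{n-1}}\! g(\tanh(\eta/2)\Theta)\, f_{Z_\varepsilon}(\tanh(\eta/2))\,\sinh(\eta)^{n-1}\,d\eta\,\Lambda(d\Theta),$$
then plug in \eqref{SCALED_DENS_PROP}, so that the factor $\sinh(\eta)^{n-1}$ cancels, and finally change variables $\eta=\varepsilon u$, absorbing $1/\varepsilon$ into $d\eta=\varepsilon\,du$ to reach
$$\E[g(Z_\varepsilon)]=\int_0^{+\infty}\!\int_{\S^{n-1}}\! g(\tanh(\varepsilon u/2)\Theta)\, f_Z(\tanh(u/2))\,\sinh(u)^{n-1}\,du\,\Lambda(d\Theta).$$
Reading $Z=\tanh(u/2)\Theta$ in geodesic polar coordinates, the argument of $g$ is exactly $\tanh(\varepsilon\atanh(\|Z\|))Z/\|Z\|$, which establishes \eqref{FUNC_MEAN_SCALED}.

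For the final equivalence, comparing \eqref{FUNC_MEAN_SCALED} with the definition \eqref{MULT_MOBIUS} identifies $\tanh(\varepsilon\atanh(\|Z\|))Z/\|Z\|$ with $\varepsilon\otimes Z$, so that $\E[g(Z_\varepsilon)]=\E[g(\varepsilon\otimes Z)]$ for every bounded measurable $g$, whence $Z_\varepsilon\overset{({\rm law})}{=}\varepsilon\otimes Z$. The reverse implication is transparent: since $2\atanh(\|\varepsilon\otimes Z\|)=2\varepsilon\atanh(\|Z\|)$ by \eqref{MULT_MOBIUS}, the law of $\varepsilon\otimes Z$ in geodesic radial coordinates automatically fulfills \eqref{SCALE_KTS}. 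No real obstacle is expected: the proof is entirely bookkeeping. The only point requiring mild attention is to keep distinct the two $\sinh$ factors --- the Jacobian $\sinh(\eta)^{n-1}$ from \eqref{RADIAL_GE0_MEAS} and the factor $\sinh(\eta/\varepsilon)^{n-1}$ appearing in \eqref{SCALED_DENS_PROP} --- whose interplay under the change of variables $\eta\mapsto\varepsilon u$ delivers the expected cancellation.
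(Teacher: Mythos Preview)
Your proof is correct and follows essentially the same route as the paper: both pass to geodesic polar coordinates, use the change of variables $\eta\mapsto \eta/\varepsilon$ to identify the density relation \eqref{SCALED_DENS_PROP}, and then repeat the same substitution inside $\E[g(Z_\varepsilon)]$ to obtain \eqref{FUNC_MEAN_SCALED} and the identification with $\varepsilon\otimes Z$. The only cosmetic difference is that you differentiate the integral identity in $\tau$ while the paper changes variables first and matches integrands; these are equivalent.
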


\begin{proof}
Since we assumed $Z $ to have a smooth compacted support density in $\B^n $, the above identity \textcolor{black}{\eqref{SCALED_DENS_PROP}} gives that this is also the case for $ Z_\varepsilon $. Denoting the corresponding density w.r.t. $\mu_{\B^n} $ by $f_{ Z_\varepsilon} $, we write: 
\begin{align*}
\mu_{ Z_\varepsilon,R}\Big(\Big\{ \eta \in \R_+: \eta\le \tau\varepsilon\Big\} \Big)&=\int_{0}^{ \tau \varepsilon} \mu_{ Z_{\varepsilon},R}(d\eta)=\Omega_{n-1}\int_{0}^{\tau \varepsilon} f_{ Z_{\varepsilon}}(\tanh(\frac \eta 2)) (\sinh \eta)^{n-1} d\eta\\
&\underset{\tilde \eta=\frac  \eta \varepsilon}{=}\Omega_{n-1}\int_0^\tau f_{ Z_{\varepsilon}}(\tanh(\frac {\varepsilon\tilde \eta}{ 2})) (\sinh(\varepsilon\tilde \eta))^{n-1}\varepsilon d\tilde \eta\\
&=\Omega_{n-1}\int_0^\tau \Big[\varepsilon f_{ Z_{\varepsilon}}(\tanh(\frac {\varepsilon\tilde \eta}{ 2})) \Big(\frac{\sinh(\varepsilon \tilde \eta)}{\sinh (\tilde \eta)}\Big)^{n-1}\Big]\sinh(\tilde \eta)^{n-1}d\eta.
\end{align*}
The scaling identity \eqref{SCALE_KTS} then holds if and only if
\begin{equation*}
 \Big[\varepsilon f_{ Z_{\varepsilon}}(\tanh(\frac {\varepsilon \tilde \eta }{ 2})) \Big(\frac{\sinh(\varepsilon\tilde \eta)}{\sinh (\tilde \eta)}\Big)^{n-1}\Big]=f_{Z}(\tanh(\frac{\tilde \eta}2))\iff f_{ Z_\varepsilon}(\tanh(\frac \eta 2))=\frac 1\varepsilon f_Z(\tanh(\frac{\eta}{2\varepsilon}))\Big(\frac{\sinh( \frac {\eta}{\varepsilon})}{\sinh ( \eta)}\Big)^{n-1}.
 \end{equation*}
Importantly, since $\supp(f_Z) \subset B(0,R) $ for some $R\in (0,1)$ it then holds that $\supp(f_{ Z_\varepsilon})\in (0,\textcolor{black}{\tanh(\varepsilon \atanh(R))})$.

Let us specify what is the \textit{transform} of the initial random variable $Z$ which has density $f_{ Z_\varepsilon}$. For a bounded measurable function $g$ and \textcolor{black}{from \eqref{RADIAL_HYPER}-\eqref{RADIAL_GE0_MEAS} write}:
\begin{align*}
\E[g( Z_\varepsilon)]&=\int_{\B^n} g(z) f_{ Z_\varepsilon}(z) \mu_{\B^n}(dz)\\
&=\int_{\mathbb S^{d-1}}\Lambda (d\Theta)\int_0^{+\infty} g(\tanh(\frac{\tilde \eta}2)\Theta) f_{ Z_\varepsilon}(\tanh(\frac{\tilde \eta}2) )\sinh(\tilde \eta)^{n-1}d \tilde \eta\\
&=\int_{\mathbb S^{d-1}}\Lambda (d\Theta)\int_0^{+\infty} g(\tanh(\frac{\tilde \eta}2)\Theta)\frac 1\varepsilon f_Z(\tanh(\frac{\tilde \eta}{2\varepsilon}))\Big(\frac{\sinh( \frac {\tilde \eta}{\varepsilon})}{\sinh ( \tilde \eta)}\Big)^{n-1}\sinh(\tilde \eta)^{n-1}d\tilde \eta\\
&\underset{\eta=\frac{\tilde \eta}\varepsilon}{=}\int_{\mathbb S^{d-1}}\Lambda (d\Theta)\int_0^{+\infty}g(\tanh(\frac{\varepsilon \eta}{2})\Theta)f_Z(\tanh(\frac{\eta}{2}))\sinh(\eta)^{n-1}d \eta\\
&=\int_{\mathbb S^{d-1}}\Lambda (d\Theta)\int_0^{+\infty}g\Big(\tanh(\frac{\varepsilon 2 \atanh(\tanh(\frac \eta 2))}{2})\Theta\Big)f_Z(\tanh(\frac{\eta}{2}))\sinh(\eta)^{n-1}d \eta\\
&=\int_{\B^n}g\Big(\tanh(\varepsilon \atanh (\|z\|))\frac{z}{\|z\|} \Big)f_Z(z) \mu_{\B^n}(dz)=\E\Big[g\Big(\tanh(\varepsilon \atanh(\|Z\|))\frac{Z}{\|Z\|}\Big)\Big].
\end{align*}
\end{proof}
Hence, to verify the condition \eqref{SCALE_KTS} we have to take $ Z_\varepsilon \overset{({\rm law})}{=}\tanh(\varepsilon \atanh(\|Z\|)\frac{Z}{\|Z\|})\textcolor{black}{=\varepsilon \otimes Z}\neq \varepsilon Z $, that would \textit{somehow} be the natural scaling in the Euclidean setting.

\begin{proposition}[Variance for the random walks]\label{VAR_OF_THE_WALKS}
Let $Z$ satisfy \H{R}.  Set for $\varepsilon>0 $, $ Z_\varepsilon := \varepsilon \otimes Z$. It then holds that there exists $C\ge 1$ s.t.
\begin{equation}\label{VAR_CTR}
V_{ Z_\varepsilon}\le C\varepsilon^2.
\end{equation} 
In particular, choosing $\varepsilon=\frac {1}{\sqrt N} $, the above control can be specified to derive that with the notation of \eqref{WALKS}:
\begin{equation}\label{VAR_LIM}
V_{S_N}=V_{\oplus_{j=1}^N \frac 1{\sqrt N}\otimes Z^j}\underset{N}{\longrightarrow} \textcolor{black}{t :=\frac 1n \int_0^{+\infty } \tilde \eta ^2  \mu_{Z,R}(d\tilde \eta)},
\end{equation}
which is precisely the asymptotic variance appearing in Theorems \ref{CLT} and \ref{LLT}.
Furthermore, there exists $C\ge 1$ s.t.
\begin{equation}\label{CONV_RATE_FOR_VARIANCE}
\textcolor{black}{|V_{S_N}-t|\le \frac{C}{N}},
\end{equation}
and 
\begin{equation}\label{VAR_LIM_LLN}
V_{\bar S_N}=V_{\oplus_{j=1}^N \frac 1{ N}\otimes Z^j}\underset{N}{\longrightarrow} 0,
\end{equation}
which is also coherent with the statement of Theorem \ref{LLN}.
\end{proposition}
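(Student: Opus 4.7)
The plan is to Taylor--expand the Fourier--Helgason transform $\hat f_{Z_\varepsilon}$ near $\lambda=0$ as $\varepsilon\downarrow 0$, read off $V_{Z_\varepsilon}$ through \eqref{DEF_VAR}, and then conclude for $S_N$ and $\bar S_N$ via the additivity of the variance proved in the previous proposition.

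First I would use Proposition \ref{SCALED_KTS} combined with the radial formula \eqref{TF_H} of the Fourier--Helgason transform in geodesic polar coordinates, which concentrates the whole $\varepsilon$-dependence inside the argument of $\varphi_\lambda$:
\[
\hat f_{Z_\varepsilon}(\lambda)=\int_{0}^{+\infty}\varphi_\lambda\big(\tanh(\tfrac{\varepsilon\eta}{2})\big)\,\mu_{Z,R}(d\eta).
\]
Under \H{R} the support of $\mu_{Z,R}$ is compactly contained in some $[0,M]$, so the evaluation radius $\varepsilon\eta$ stays uniformly small as $\varepsilon\downarrow 0$. Noting that $\hat f_{Z_\varepsilon}(0)$ is a $\lambda$-constant, \eqref{DEF_VAR} reduces to $V_{Z_\varepsilon}=-\hat f''_{Z_\varepsilon}(0)/\hat f_{Z_\varepsilon}(0)$, so it is enough to describe $\varphi_\lambda(\tanh(x/2))$ locally near $x=0$.

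The key analytic step will be the local expansion
\[
\varphi_\lambda\big(\tanh(\tfrac{x}{2})\big)=1-\frac{\lambda^2+\rho^2}{2n}\,x^2+O(x^4),\qquad x\downarrow 0,
\]
uniformly in $\lambda$ on a neighborhood of $0$. I would derive it either by injecting a power series $\sum_{k\ge 0}a_k(\lambda)x^{2k}$ in the eigenvalue equation \eqref{DIFF_EQ} written in radial geodesic form (using $\coth\eta=\eta^{-1}+\eta/3+O(\eta^3)$) and matching the constant term, or equivalently by Taylor--expanding $\cos(\lambda s)$ inside the integral \eqref{EIG_FUNC_GEO} and using the Beta--function identity
\[
\frac{\int_0^1(1-u^2)^{(n-3)/2}u^2\,du}{\int_0^1(1-u^2)^{(n-3)/2}du}=\frac{1}{n},
\]
which accounts for the $1/n$ prefactor appearing in $t$. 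Differentiating twice in $\lambda$ then yields $\partial_\lambda^2\varphi_\lambda(\tanh(x/2))|_{\lambda=0}=-x^2/n+O(x^4)$ and $\varphi_0(\tanh(x/2))=1+O(x^2)$. Substituting $x=\varepsilon\eta$, integrating against $\mu_{Z,R}$ (whose compact support controls the remainder) and forming the quotient gives
\[
V_{Z_\varepsilon}=\frac{\varepsilon^2}{n}\int_{0}^{+\infty}\eta^2\,\mu_{Z,R}(d\eta)+O(\varepsilon^4),
\]
which already establishes \eqref{VAR_CTR} for $\varepsilon\in(0,1]$ and therefore for all $\varepsilon>0$ after possibly enlarging $C$.

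The random walk estimates then follow by additivity. Since Möbius multiplication preserves radiality, the variance--of--the--sum proposition applies to the i.i.d. radial summands $\tfrac{1}{\sqrt N}\otimes Z^j$, giving $V_{S_N}=N\,V_{Z_{1/\sqrt N}}=t+O(1/N)$, which delivers \eqref{VAR_LIM} and the rate \eqref{CONV_RATE_FOR_VARIANCE} at once. With $\varepsilon=1/N$ one gets $V_{\bar S_N}=N\,V_{Z_{1/N}}=t/N+O(1/N^{3})\to 0$, which is \eqref{VAR_LIM_LLN}. The main obstacle will be the uniform local expansion of $\varphi_\lambda$ at the origin together with a usable remainder; once the ratio $1/n$ has been isolated, the rest is essentially bookkeeping.
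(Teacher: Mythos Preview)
Your proposal is correct and follows essentially the same route as the paper: both reduce to the scaled identity $\hat f_{Z_\varepsilon}(\lambda)=\int_0^\infty \varphi_\lambda(\tanh(\varepsilon\eta/2))\,\mu_{Z,R}(d\eta)$ from Proposition~\ref{SCALED_KTS}, expand $\varphi_\lambda$ at the origin, isolate the factor $1/n$ via the Beta-type identity $\int_0^1(1-u^2)^{(n-3)/2}u^2\,du\big/\int_0^1(1-u^2)^{(n-3)/2}\,du=1/n$, and conclude for $S_N,\bar S_N$ by additivity. The only cosmetic difference is that the paper carries out the expansion by brute force on the integral representation \eqref{EIG_FUNC_GEO} (Taylor-expanding the $\sinh$'s and $\cosh$'s), whereas you obtain the quadratic coefficient $-(\lambda^2+\rho^2)/(2n)$ more cleanly by matching the first nontrivial term in the ODE \eqref{DIFF_EQ}; both lead to the same $V_{Z_\varepsilon}=\tfrac{\varepsilon^2}{n}\int\eta^2\mu_{Z,R}(d\eta)+O(\varepsilon^4)$.
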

\begin{proof}
For $ Z_\varepsilon  $, we again derive from \eqref{DEF_VAR}, \eqref{EIG_FUNC_GEO} that 
\begin{align}
V_{ Z_\varepsilon}=&-\partial_\lambda^2 \frac{\hat f_{ Z_\varepsilon}(\lambda)}{\hat f_{ Z_\varepsilon}(0)}\Big|_{\lambda=0}=-\partial_\lambda^2\frac{\Omega_{n-1}}{\hat f_{Z_\varepsilon}(0)}  \int_0^{+\infty}f_{ Z_\varepsilon}(\tanh(\frac{\eta}{2}))( \varphi_\lambda(\tanh(\frac \eta 2))) \sinh(\eta)^{n-1}d\eta\Big|_{\lambda=0}\notag\\
=& -\partial_\lambda^2\frac{\Omega_{n-1}}{\hat f_{Z_\varepsilon}(0)}\int_0^{+\infty}\ f_Z(\tanh(\frac{\tilde \eta}2))\varphi_\lambda(\tanh(\frac {\varepsilon \tilde \eta}2))\sinh (\tilde \eta)^{n-1} d\tilde \eta\Big|_{\lambda=0}\notag\\
=&\frac{1}{\hat f_{ Z_\varepsilon}(0)} \int_0^{+\infty} \Big( \tilde C_n\sinh(\bar \eta)^{2-n}\int_0^{\bar \eta} ds(\cosh(\bar \eta)-\cosh(s))^{\frac {n-3}2} s^2 \cos(\lambda s) \Big)\Big|_{\bar \eta=\varepsilon \tilde \eta}\mu_{Z,R}(d\tilde \eta)\Big|_{\lambda=0}\label{EQ_VAR_EPS}\\
=&\frac{1}{\hat f_{ Z_\varepsilon}(0)} \int_0^{+\infty} \Big( \tilde C_n\sinh(\bar \eta)^{2-n}\int_0^{\bar \eta} ds(\cosh(\bar \eta)-\cosh(s))^{\frac {n-3}2} s^2 \Big)\Big|_{\bar \eta=\varepsilon \tilde \eta}\mu_{Z,R}(d\tilde \eta)\notag\\
\le & \frac{1}{\hat f_{ Z_\varepsilon}(0)} \int_0^{+\infty} \Big( \tilde C_n \bar \eta^2\sinh(\bar \eta)^{2-n}\int_0^{\bar \eta} ds(\cosh(\bar \eta)-\cosh(s))^{\frac {n-3}2}  \Big)\Big|_{\bar \eta=\varepsilon \tilde \eta}\mu_{Z,R}(d\tilde \eta)\notag\\
\le &\frac{\varepsilon^2}{\hat f_{ Z_\varepsilon}(0)} \int_0^{+\infty} \tilde  \eta ^2\varphi_{0}( \tanh(\varepsilon\frac{\tilde  \eta} 2))\mu_{Z,R}(d\tilde \eta)\le \textcolor{black}{C \frac{\varepsilon^2}{\hat f_{\bar Z_\varepsilon}(0)} \int_{0}^\infty\tilde  \varphi_{0}( \tanh(\varepsilon\frac{\tilde  \eta} 2))\mu_{Z,R}(d\tilde \eta) \le C\varepsilon^2},\notag
\end{align}
\textcolor{black}{using that $\mu_{Z,R}$ has compact support in $\R_+ $ for the last but one inequality and recalling that $\int_{0}^\infty\tilde  \varphi_{0}( \tanh(\varepsilon\frac{\tilde  \eta} 2))\mu_{Z,R}(d\tilde \eta)=\hat f_{Z_\varepsilon}(0) $ for the last one}.

In particular, considering now $S_{N}=\oplus_{j=1}^n \frac{1}{\sqrt N}\otimes Z^i$ where the $( Z^i)_{i\ge 1}$ are independent with the same law as $Z$, there exists $C\ge 1$ s.t. for all $N\ge 1$,
$$V_{S_N}=V_{\oplus_{j=1}^N \frac 1{\sqrt N}\otimes Z^{i}}=N V_{\frac {1}{\sqrt N}\otimes Z}\le C.$$
Let us now establish \eqref{VAR_LIM}. To this end we must precisely expand the inner integral in \eqref{EQ_VAR_EPS}. 
To this end we introduce the quantity:
\begin{align*}
I_{\varepsilon}(\tilde \eta):=&\sinh(\varepsilon \tilde  \eta)^{2-n}\int_0^{\varepsilon \tilde \eta} ds(\cosh(\varepsilon \tilde \eta)-\cosh(s))^{\frac {n-3}2} s^2 \\
=&(\varepsilon \tilde \eta)^3\sinh(\varepsilon \tilde \eta)^{2-n}\int_0^{1} d\tilde s\big(2\sinh(\frac{\varepsilon \tilde \eta}2(1+\tilde s))\sinh(\frac{\varepsilon \tilde \eta}2(1-\tilde s))\big)^{\frac {n-3}2} \tilde s^2\\
=&(\varepsilon \tilde \eta)^3 (\varepsilon \tilde \eta+(\varepsilon \tilde \eta)^3+o(\varepsilon^3))^{2-n}\\
&\times \int_0^1 d\tilde s\big(2(\frac{\varepsilon \tilde \eta}2(1+\tilde s)+(\frac{\varepsilon \tilde \eta}2(1+\tilde s))^3+o(\varepsilon^3))(\frac{\varepsilon \tilde \eta}2(1-\tilde s)+(\frac{\varepsilon \tilde \eta}2(1-\tilde s))^3+o(\varepsilon^3)) \big)^{\frac{n-3}2} \tilde s^2\\
=&(\varepsilon \tilde \eta)^{3+2-n} (1+(\varepsilon \tilde \eta)^2+o(\varepsilon^2))^{2-n}2^{\frac {3-n}2}(\varepsilon \tilde \eta)^{n-3}\\
&\times \int_0^1 d\tilde s \tilde s^2\big((1+\tilde s)(1-\tilde s)\big)^{\frac{n-3}{2}}\big((1+(\frac{\varepsilon \tilde \eta}2)^2(1+\tilde s)^2+o(\varepsilon^2))(1+(\frac{\varepsilon \tilde \eta}2(1-\tilde s))^2+o(\varepsilon^2)) \big)^{\frac{n-3}2} \\
=&(\varepsilon \tilde \eta)^2(1+(\varepsilon \tilde \eta)^2+o(\varepsilon^2))^{2-n}2^{\frac {3-n}2}\\
&\times \int_0^1 d\tilde s \tilde s^2\big((1+\tilde s)(1-\tilde s)\big)^{\frac{n-3}{2}}\big((1+(\frac{\varepsilon \tilde \eta}2)^2(1+\tilde s)^2+o(\varepsilon^2))(1+(\frac{\varepsilon \tilde \eta}2(1-\tilde s))^2+o(\varepsilon^2)) \big)^{\frac{n-3}2} .
\end{align*}
It therefore follows that there exists a constant $\bar c:=\bar c(n,f_Z),$ s.t. for all $\tilde \eta \in (0,2\atanh(R)) $
$$ |I_{\varepsilon}(\tilde \eta)(\varepsilon \tilde \eta)^{-2} -\mathfrak c_n|\le \bar c \varepsilon^2, \bar c:=\bar c(n,f_Z).$$
\textcolor{black}{Observe now that, similarly to the previous computations:
\begin{align}
\hat f_{Z_\varepsilon}(0)&=\tilde C_n\int_0^{+\infty} J_{\varepsilon}(\tilde \eta)\mu_{Z,R}(d\tilde \eta),\label{FOR_ASYMP_EPS_TF}\\
J_{\varepsilon}(\tilde \eta)&=\sinh(\varepsilon \tilde  \eta)^{2-n}\int_0^{\varepsilon \tilde \eta} ds(\cosh(\varepsilon \tilde \eta)-\cosh(s))^{\frac {n-3}2}  \notag\\
=&(1+(\varepsilon \tilde \eta)^2+o(\varepsilon^2))^{2-n}2^{\frac {3-n}2}\notag\\
&\times \int_0^1 d\tilde s \big((1+\tilde s)(1-\tilde s)\big)^{\frac{n-3}{2}}\big((1+(\frac{\varepsilon \tilde \eta}2)^2(1+\tilde s)^2+o(\varepsilon^2))(1+(\frac{\varepsilon \tilde \eta}2(1-\tilde s))^2+o(\varepsilon^2)) \big)^{\frac{n-3}2} .\notag
\end{align}
We now rewrite from \eqref{EQ_VAR_EPS} and \eqref{FOR_ASYMP_EPS_TF},
\begin{align*}
V_{Z_\varepsilon}&=\frac{\int_{0}^{+\infty} \tilde C_n I_\varepsilon(\tilde \eta)\mu_{Z,R}(d\eta)}{\int_{0}^{+\infty} \tilde C_n J_\varepsilon(\tilde \eta)\mu_{Z,R}(d\eta)}=\frac{ \int_0^1 d\tilde s \tilde s^2\big((1+\tilde s)(1-\tilde s)\big)^{\frac{n-3}{2}}}{ \int_0^1 d\tilde s \big((1+\tilde s)(1-\tilde s)\big)^{\frac{n-3}{2}}} \varepsilon^2 \frac{\int_0^{+\infty} \tilde \eta ^2(1+O((\varepsilon \tilde \eta)^2))\mu_{Z,R}(d\tilde \eta)}{\int_0^{+\infty} (1+O((\varepsilon \tilde \eta)^2))\mu_{Z,R}(d\tilde \eta)}\\
&=\frac{ \int_0^1 d\tilde s \tilde s^2\big(1-\tilde s^2\big)^{\frac{n-3}{2}}}{ \int_0^1 d\tilde s \big(1-\tilde s^2\big)^{\frac{n-3}{2}}} \varepsilon^2 \Big(\int_0^{+\infty} \tilde \eta ^2\mu_{Z,R}(d\tilde \eta)+O(\varepsilon^2)\Big).
\end{align*}
Setting for $m\in \N,\ I(m):=\int_0^\pi \sin(\theta)^m d\theta$ and changing variables setting $\tilde s=\cos(\theta) $, it is easily seen that:
$$\frac{ \int_0^1 d\tilde s \tilde s^2\big(1-\tilde s^2\big)^{\frac{n-3}{2}}}{ \int_0^1 d\tilde s \big(1-\tilde s^2\big)^{\frac{n-3}{2}}}=\frac{\int_0^\pi \cos^2(\theta)\sin(\theta)^{n-2}d\theta}{\int_0^\pi \sin(\theta)^{n-2}d\theta} =\frac{I(n-2)-I(n)}{I(n-2)}=1-\frac{I(n)}{I(n-2)}=1-\frac{n-1}{n}=\frac 1n,$$
where the last but one inequality follows from a direct integration by part. We have thus established that:
$$V_{Z_\varepsilon}=\frac{\varepsilon^2}n \Big(\int_0^{+\infty} \tilde \eta ^2\mu_{Z,R}(d\tilde \eta)+O(\varepsilon^2)\Big).$$
}
In particular,
 for $\varepsilon=\frac 1{\sqrt N} $ this precisely gives
\begin{equation*}
|V_{\oplus_{j=1}^N \frac 1{\sqrt N}\otimes Z^{j}}-t|\le \frac{C}{ N},
\end{equation*}
which is precisely \eqref{CONV_RATE_FOR_VARIANCE} and readily gives as well \eqref{VAR_LIM}. . Equation \eqref{VAR_LIM_LLN} is derived similarly using the corresponding specific scaling.
\end{proof}

\mysection{Proof of the main results} 
 \label{PROOFS}
 
 For a given $N\in \N$, let us consider the variables $\bar S_N$ and $S_N $ defined in \eqref{WALKS}. From
 \eqref{DEF_CONV} and Proposition \ref{PROP_CONV} we derive that they both possess a (radial) density, $f_{\bar S_N} $ and $f_{S_N} $ respectively, and:
 \begin{align}
 \label{PROP_CONV_WALK}
 f_{\bar S_N}&=f_{\frac 1{ N}\otimes Z^1}*\cdots*f_{\frac 1{ N}\otimes Z^N},\notag\\
 f_{S_N}&=f_{\frac 1{\sqrt N}\otimes Z^1}*\cdots*f_{\frac 1{\sqrt N}\otimes Z^N}.
 \end{align}

%
%
%
%
%
 We will denote by 
 \begin{equation}\label{GEO_RAD}
  \mu_{\varepsilon \otimes Z,R}(d\eta) = \Omega_{n-1}f_{\varepsilon \otimes Z}(\tanh(\frac \eta 2))(\sinh \eta)^{n-1} d\eta, \eta\ge 0,\ \varepsilon\in \{\frac{1}{\sqrt N},\frac 1N\}, 
  \end{equation}
   the radial density of $\varepsilon\otimes Z$ in geodesic polar coordinates. This corresponds for $\varepsilon=\frac 1{\sqrt N} $ to what in \cite{karp:tutu:shur:59} is denoted by $\hat\mu_{N,1} $. \\
 
 \subsection{Proof of the Theorem \ref{LLN}: Law of large numbers}
 \subsubsection{Proof of the main estimate}
 It suffices to establish the pointwise convergence of the characteristic function of $\bar S_N $  as defined in \eqref{CAR_FUNC} towards the constant $1$. From \eqref{PROP_CONV_WALK} and Proposition \ref{PROP_CONV}, we have for all $\lambda \in \R $:
  \begin{align}\label{FONC_CAR_SUM_LLN}
\Phi_{\bar S_{N}}(\lambda)=\frac{\hat f_{\bar S_{N}}(\lambda)}{\hat f_{\bar S_{N}}(0)}=\left( \frac{\hat f_{\frac 1N\otimes Z} (\lambda)}{\hat f_{\frac 1N\otimes Z}(0)}\right)^{N}=\exp\left( N \log\left(\frac{\hat f_{\frac 1N\otimes Z } (\lambda)}{\hat f_{\frac 1N\otimes Z}(0)}\right)\right)=\exp(N\log (\Phi_{\frac 1N\otimes Z}(\lambda))) .
 \end{align}
 From \eqref{0_ODD_MOMENTS}, we get
 \begin{align}\label{TAYLOR_FONC_CAR_LLN}
\Phi_{\frac 1N \otimes Z}(\lambda)=1-\lambda^2 \int_0^1 d\delta (1-\delta)\frac{\hat f_{\frac 1N \otimes Z}^{(2)}(\delta \lambda)}{\hat f_{\frac 1N\otimes Z}(0)}.
 \end{align}
We get similarly to the proof of Proposition \ref{VAR_OF_THE_WALKS}, \textcolor{black}{see e.g. \eqref{EQ_VAR_EPS} replacing therein $\lambda=0 $ by $\delta\lambda $}, that
$$\Big|\int_0^1 d\delta (1-\delta)\frac{\hat f_{\frac 1N \otimes Z}^{(2)}(\delta \lambda)}{\hat f_{\frac 1N\otimes Z}(0)}\Big|\le \textcolor{black}{\frac 12 V_{\frac 1N\otimes Z}\underset{\eqref{VAR_CTR}}{\le}} \frac{C}{N^2}, $$
which plugged into  \eqref{TAYLOR_FONC_CAR_LLN}, \eqref{FONC_CAR_SUM_LLN} gives $\Phi_{\bar S_{N}}(\lambda)\underset{N}{\rightarrow }1 $ which yields the stated convergence.

\subsubsection{Connection with the Sturm geodesic walks}
In the article \cite{stur:02}, Sturm proposed a very general construction of geodesic random walks in  non positive curvature (NPC) metric spaces and established a corresponding \textcolor{black}{\textit{weak}} and \textit{strong} law of large number. In particular, the Hyperbolic space $\mathbb H^n$ enters this setting.
 The construction proposed in \cite{stur:02} is the following. Let $N$ be a NPC metric space.
 For fixed $n\in \N^*$, let $\tilde S_{n-1}$ be given (representing the value of the normalized walk, with the scaling of the law of large numbers, at time $n-1$). Let $Z^n$ be the $n^{{\rm th}}$ innovation variable (defined on some probabilistic space $(\Omega,\mathcal A,\P) $ and $N$ valued). 
 
 Consider a a geodesic line $\big(x(t)\big)_{t\in [0,1]}$ s.t. $x(0)=\tilde S_{n-1}, x(1)=Z^n$. The walk is then updated setting 
$$\tilde S_n=x(\frac 1n) =:\Big(1-\frac 1n \Big) \tilde S_{n-1}+Z^n.
$$
We insist that the last equality is a definition and purely notational. Theorem 4.7 in \cite{stur:02} establishes that the walk asymptotically converges towards the \textit{mean} in law and probability (weak law of large numbers). The convergence is strong provided $Z^n$ is bounded. The \textit{mean} is intended here as the 
 \textit{barycenter} of the underlying law $\mu_Z $. If $Z\in L^2(\Omega,N)$ it corresponds to the unique minimizer of the mapping $z\mapsto \E[d^2(z,X)] $, where $d$ denotes here the distance between two points on $N$. We refer to Section 4 of \cite{stur:02} for additional details.

 Let us first describe in the current specific case of $\mathbb H^n$ how the previous construction can be related to the M\"obius addition and multiplication. We will actually prove that
\begin{equation}\label{GEO_WALK__WITH_MOBIUS}
 \tilde S_n=x(\frac 1n) =\tilde S_{n-1}\oplus \frac 1n\otimes (\ominus\tilde S_{n-1}\oplus Z^n),
 \end{equation}
 where for $a \in \B^n$, $\ominus a $ is the left inverse, i.e. $\ominus a\oplus a=0 $. Actually, it can be observed from \eqref{Mobius_ADD} that $\ominus a= -a$.
 To prove \eqref{GEO_WALK__WITH_MOBIUS} we will use the following result whose proof can be found in Section 4.1 of \cite{bara:unga:20}.
 \begin{proposition}[Geodesic lines and the M\"obius addition]\label{PROP_MOB_ADD}
 Let $a,b$ in $\B^n$. The geodesic line $\big(x(t) \big)_{t\in [0,1]}$  s.t. $x(0)=a$ and $x(1)=a\oplus b $ corresponds to the initial velocity
 $\dot x(0)=(1-\|a\|^2) \frac{\atanh(\|b\|)}{\|b\|}b $. Furthermore, for $t\in [0,1]$ it holds that $x(t)=a\oplus t\otimes b $.
 \end{proposition}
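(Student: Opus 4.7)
The plan is to exploit two classical facts about the Poincar\'e ball model: geodesics through the origin are Euclidean rays (suitably reparametrized), and the M\"obius translations $T_{-a}\colon x\mapsto a\oplus x$ are isometries of the hyperbolic metric (see Ahlfors \cite{ahlf:81}; this is consistent with points~2 and~3 of Proposition \ref{PROP_TA}, which already encode the conformality and volume preservation of these maps). Once these are granted, the general statement will reduce to the origin case via a chain-rule computation.

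First, the case $a=0$. In radial coordinates the hyperbolic arclength of a Euclidean segment $s\mapsto s\,b/\|b\|$, $s\in[0,\|b\|]$, equals $\int_0^s \frac{2\,d\rho}{1-\rho^2}=2\atanh(s)$, in accordance with \eqref{RADIAL_HYPER}. Consequently any geodesic emanating from $0$ and reaching $b$ at time $1$ must be of the form $y(t)=\tanh(tC)\,b/\|b\|$ for some $C>0$, and the endpoint condition $y(1)=b$ forces $C=\atanh(\|b\|)$. By the definition \eqref{MULT_MOBIUS} this is exactly $y(t)=t\otimes b$, and differentiating at $t=0$ gives $\dot y(0)=\atanh(\|b\|)\,b/\|b\|$, which matches the claimed initial velocity for $a=0$ (noting that $1-\|a\|^2=1$ there).

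For general $a$, set $x(t):=a\oplus(t\otimes b)=T_{-a}(t\otimes b)$. Since $T_{-a}$ is a hyperbolic isometry sending $0\mapsto a$, the curve $x(\cdot)$ is a geodesic with $x(0)=a$ and $x(1)=a\oplus b$; uniqueness of geodesics joining two points in the simply connected, negatively curved space $\mathbb H^n$ then identifies it with the geodesic in the statement. The initial velocity is given by the chain rule $\dot x(0)=D_yT_{-a}(0)\cdot\dot y(0)$, so it only remains to compute $D_y(a\oplus y)|_{y=0}$ from \eqref{Mobius_ADD}. Using that the denominator equals $1$ and the numerator equals $a$ at $y=0$, the quotient rule yields
\begin{equation*}
D_y(a\oplus y)|_{y=0}\cdot v \;=\; 2\langle a,v\rangle a+(1-\|a\|^2)v-2\langle a,v\rangle a \;=\; (1-\|a\|^2)v,
\end{equation*}
so $D_yT_{-a}(0)=(1-\|a\|^2)\,{\rm Id}$. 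Substituting $\dot y(0)=\atanh(\|b\|)\,b/\|b\|$ produces the announced formula for $\dot x(0)$, while $x(t)=a\oplus(t\otimes b)$ gives the stated parametrization. The only genuinely delicate point in this plan is the isometry property of $T_{-a}$, which is not proved in the present paper; we rely on \cite{ahlf:81,bara:unga:20} for it, and the remaining steps are straightforward calculations from the definitions \eqref{Mobius_ADD} and \eqref{MULT_MOBIUS}.
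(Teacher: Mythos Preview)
Your argument is correct. Note that the paper does not actually supply a proof of this proposition: it merely states that the result ``can be found in Section 4.1 of \cite{bara:unga:20}'' and then uses it as a black box. What you have written is therefore not a comparison point but an upgrade---you give a self-contained derivation from the definitions \eqref{Mobius_ADD}, \eqref{MULT_MOBIUS} and the classical fact (from \cite{ahlf:81}) that M\"obius translations are hyperbolic isometries. The reduction to the origin case via $T_{-a}$, the identification $y(t)=t\otimes b$ as the constant-speed radial geodesic, and the chain-rule computation $D_y(a\oplus y)|_{y=0}=(1-\|a\|^2)\,{\rm Id}$ are all sound; the only external input is precisely the isometry property you flag, which is standard and consistent with what the paper already invokes from \cite{ahlf:81} and \cite{bara:unga:20}.
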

 
 To establish \eqref{GEO_WALK__WITH_MOBIUS}, which relates the geodesic walk $\tilde S_n $ in \cite{stur:02} to the M\"obius addition and multiplication, it then suffices to apply  Proposition \ref{PROP_MOB_ADD} with $a=\tilde S_{n-1}$ and $b=\ominus \tilde S_{n-1}\oplus Z^n$\footnote{to obtain the appropriate $b$ one has to solve $\tilde S_n\oplus b=Z^n $, which according to Proposition 3 in \cite{ferr:ren:11} admits the unique solution given by the indicated $b$.}, for which one indeed derives $x(1)=Z^n $ and to eventually take $t=\frac 1n$. The corresponding limit \textit{mean} in the previous sense is clearly 0 (center of the ball).
 
 Now, if the Möbius multiplication were distributive w.r.t. the Möbius addition, i.e.
 \begin{equation}\label{DIST}\tag{D}
 \frac 1n\otimes \Big( \ominus \tilde S_{n-1}\oplus Z^n\Big)=\frac 1n\otimes  \ominus \tilde S_{n-1}\oplus \frac 1n\otimes Z^n=-\frac 1n \otimes \tilde S_{n-1}\oplus \frac 1n \otimes Z^n,
 \end{equation}
 we would then obtain from \eqref{GEO_WALK__WITH_MOBIUS}, using as well that for $x\in \B^n, \lambda,\mu \in \R,\ \lambda \otimes x\oplus \mu\otimes x=(\lambda+\mu) \otimes x $ (see \cite{bara:unga:20}):
 $$\tilde S_n=(1-\frac 1n)\otimes \tilde S_{n-1}\oplus \frac 1n\otimes Z^n. $$
 A direct induction would then give $\tilde S_n=\oplus_{j=1}^n \frac 1n \otimes Z^j $ and the geodesic random walk would then correspond to the one we considered.
 
 However, in order to have the distributivity property \eqref{DIST} for the M\"obius multiplication, it is necessary and sufficient (within the setting of \textit{gyrogroups}) see e.g. \cite{kim:15}, that
 $${\rm gyr}[x,y]z:=-(x\oplus y)\oplus (x\oplus (y\oplus z))=z,$$
 i.e the gyration operator must be the identity. This property clearly fails in the current setting due to the non associativity of the M\"obius addition.
 
 Hence, the walks in \cite{stur:02} and the ones considered here are different objects (even though they converge to the same limit for the corresponding law of large numbers).

 \subsection{Proof of Theorem \ref{CLT}}
%
 \begin{proof}
 From \eqref{INV} it suffices to establish the convergence of the characteristic function  that we normalize as in \eqref{CAR_FUNC}. From \eqref{PROP_CONV_WALK} and Proposition \ref{PROP_CONV}, write for all $\lambda\in \R $:
 \begin{align}\label{FONC_CAR_SUM}
\Phi_{S_{N}}(\lambda)=\frac{\hat f_{S_{N}}(\lambda)}{\hat f_{S_{N}}(0)}=\left( \frac{\hat f_{\frac 1{\sqrt N}\otimes Z } (\lambda)}{\hat f_{\frac 1{\sqrt N}\otimes Z}(0)}\right)^{N}=\exp\left( N \log\left(\frac{\hat f_{\frac 1{\sqrt N}\otimes Z } (\lambda)}{\hat f_{\frac 1{\sqrt N}\otimes Z}(0)}\right)\right)=\exp(N\log (\Phi_{\frac 1{\sqrt N}\otimes Z}(\lambda))) .
 \end{align}
 From \eqref{0_ODD_MOMENTS}, \eqref{DEF_VAR} we derive that
 \begin{align}\label{TAYLOR_FONC_CAR}
\Phi_{\frac 1{\sqrt N}\otimes Z}(\lambda)=1-\frac{\lambda^2}{2} V_{\frac 1{\sqrt N}\otimes Z}+\frac{\lambda^4}{6}\int_0^1 d\delta (1-\delta)^3\frac{\hat f_{\frac 1{\sqrt N}\otimes Z}^{(4)}(\delta \lambda)}{\hat f_{\frac 1{\sqrt N}\otimes Z}(0)}=:1-\frac{\lambda^2}{2} V_{\frac 1{\sqrt N}\otimes Z}+\frac{\lambda^4}{6}R_N,
 \end{align}
where $\hat f_{\frac 1{\sqrt N}\otimes Z}^{(4)}$ stands for the fourth derivative of the Fourier-Helgason transform. The point is now, somehow as in a \textit{usual} limit theorem to control the remainder. We have:
\begin{align}
\left|\frac{\hat f_{\frac 1{\sqrt N}\otimes Z }^{(4)} (\delta\lambda)}{\hat f_{\frac 1{\sqrt N}\otimes Z}(0)}\right|\le& \frac{\int_0^{+\infty}\Big|\partial_\nu^4 \varphi_\nu(\tanh(\frac{\eta}{2}))_{\nu=\delta \lambda }\Big|\mu_{{\frac 1{\sqrt N}\otimes Z},R}(d\eta)}{\int_0^{+\infty} \varphi_0(\tanh(\frac{\eta}{2}))\mu_{{\frac 1{\sqrt N}\otimes Z},R}(d\eta)}\notag\\
=&\frac{\int_0^{\tau}\Big|\partial_\nu^4 \varphi_\nu(\tanh(\frac{\eta}{2}))_{\nu=\delta \lambda }\Big|\mu_{{\frac 1{\sqrt N}\otimes Z},R}(d\eta)}{\int_0^{+\infty} \varphi_0(\tanh(\frac{\eta}{2}))\mu_{{\frac 1{\sqrt N}\otimes Z},R}(d\eta)}+\frac{\int_\tau^{+\infty}\Big|\partial_\nu^4 \varphi_\nu(\tanh(\frac{\eta}{2}))_{\nu=\delta \lambda }\Big|\mu_{{\frac 1{\sqrt N}\otimes Z},R}(d\eta)}{\int_0^{+\infty} \varphi_0(\tanh(\frac{\eta}{2}))\mu_{{\frac 1{\sqrt N}\otimes Z},R}(d\eta)}=:I_1(\tau)+I_2(\tau),\label{DECOUP_INTEG_DU_RESTE}
\end{align}
for some $\tau>0 $ to be specified. Observing from \eqref{EIG_FUNC_GEO} that, for $\eta\in [0,\tau] $,
\begin{equation}\label{FROM_4_TO_2}
\Big|\partial_\nu^4 \varphi_\nu(\tanh(\frac{\eta}{2}))_{\nu=\delta \lambda }\Big|\le - \tau^2 \partial_\nu^2 \varphi_\nu(\tanh(\frac{\eta}{2}))_{\nu=0 },
\end{equation}
write:
\begin{align*}
I_1(\tau)= V_{\frac 1{\sqrt N}\otimes Z}\frac{\int_0^{\tau}\Big|\partial_\nu^4 \varphi_\nu(\tanh(\frac{\eta}{2}))_{\nu=\delta \lambda }\Big|\mu_{{\frac 1{\sqrt N}\otimes Z},R}(d\eta)}{-\int_0^{\infty}\partial_\nu^2 \varphi_\nu(\tanh(\frac{\eta}{2}))_{\nu=0 }\mu_{{\frac 1{\sqrt N}\otimes Z},R}(d\eta)} \le V_{\frac 1{\sqrt N}\otimes Z} \tau^2. 
\end{align*}
Similarly to \eqref{FROM_4_TO_2} we derive that for $\eta\in [\tau,+\infty) $:
$$\Big|\partial_\nu^4 \varphi_\nu(\tanh(\frac{\eta}{2}))_{\nu=\delta \lambda }\Big|\le - \eta^2 \partial_\nu^2 \varphi_\nu(\tanh(\frac{\eta}{2}))_{\nu=0 }.
 $$

From the definition of $\mu_{\frac 1{\sqrt N}\otimes Z,R} $ introduced in \eqref{GEO_RAD} and Proposition \ref{SCALED_KTS} we derive
a tail control of the Fourier-Helgason transform. Namely, for any $\tau>0 $ it holds  that 
\begin{equation}\label{CTR_TAILS_SCALE}
\int_{\tau}^{\infty} \eta^2\partial_\nu^2 \varphi_\nu(\tanh(\frac \eta 2))|_{\nu=0} \mu_{\frac 1{\sqrt N}\otimes Z,R}(d\eta)\underset{N}{\longrightarrow} 0.
\end{equation}

We then get from \eqref{CTR_TAILS_SCALE}:
$$I_2(\tau)\le V_{\frac 1{\sqrt N}\otimes Z}  \frac{\int_{\tau}^{\infty}\eta^2\partial_\nu^2 \varphi_\nu(\tanh(\frac{\eta}{2}))_{\nu=0}\mu_{{\frac 1{\sqrt N}\otimes Z},R}(d\eta)}{\int_0^{\infty}\partial_\nu^2 \varphi_\nu(\tanh(\frac{\eta}{2}))_{\nu=0 }\mu_{{\frac 1{\sqrt N}\otimes Z},R}(d\eta)} =:V_{\frac 1{\sqrt N}\otimes Z}  \mathcal R_N(\tau), \ \mathcal R_N(\tau)\underset{N}{\longrightarrow} 0, \tau>0.$$
 
 Plugging the above controls for $I_1(\tau),I_2(\tau) $ into \eqref{DECOUP_INTEG_DU_RESTE} we derive that the remainder term $R_N$ in \eqref{TAYLOR_FONC_CAR} enjoys the upper-bound:
 $$ R_N\le \frac{ V_{\frac 1{\sqrt N}\otimes Z}}4(\tau^2+\mathcal R_N(\tau)).$$
 
Expanding then the logarithm in \eqref{FONC_CAR_SUM}, we thus get:
 \begin{align*}
 \Phi_{S_{N}}(\lambda)&=\exp\left( -\frac{\lambda^2}2 V_{\frac 1{\sqrt N}\otimes Z}N +\frac{\lambda ^4}6N R_N+o(1)\right)\\
&= \exp\left( -\frac{\lambda^2}2 t +\frac{\lambda ^4}6 t \frac{R_N}{V_{\frac 1{\sqrt N}\otimes Z}}+o(1)\right).
 \end{align*}
Taking $\tau$ small enough and then $N$ large enough we deduce that:
$$\Phi_{S_{N}}(\lambda)\underset{N}{\longrightarrow} \exp(-\frac{\lambda^2}2t) =\hat \Psi(t,\lambda),$$
 using \eqref{TF_HK_2}, i.e. we recognize that the limit is the characteristic function of the second kind of the \textit{normal} law on $\B^n$. The statement then follows from the inversion formula \eqref{INV} and domination arguments.

 \end{proof}
 
 \subsection{Proof of the Local limit Theorem \ref{LLT}}
 
 The point is now to go further than the convergence in law for the random walk established in Theorem \ref{CLT} and to establish under the previous assumptions  a (global) pointwise control for the difference of the densities. 
 From \eqref{INV} the difference writes:
 \begin{align}
 &f_{S_{N}}(\tanh(\frac \eta2))-\Psi(t,\tanh(\frac \eta 2))\notag\\
 =&C_n\int_0^{+\infty} [\hat f_{S_{N}}(\lambda)-\hat \psi(t,\lambda)]\varphi_\lambda(\tanh(\frac \eta 2))|\c(\lambda)|^{-2}d\lambda\notag\\
 =&C_n\int_0^{+\infty} [\prod_{j=1}^{N}\hat f_{\frac{1}{\sqrt N}\otimes Z^{j}}(\lambda)-\exp(-\frac{(\rho^2+\lambda^2)t}{2})]\varphi_\lambda(\tanh(\frac \eta 2))|\c(\lambda)|^{-2}d\lambda\notag\\
 =&C_n\int_0^{+\infty} (\I_{\lambda\le D_N}+\I_{\lambda>D_N})[(\hat f_{\frac 1{\sqrt N}\otimes Z}(\lambda))^N-\exp(-\frac{(\rho^2+\lambda^2)t}{2})]\varphi_\lambda(\tanh(\frac \eta 2))|\c(\lambda)|^{-2}d\lambda
 =:\textcolor{black}{(\mathcal B_N+\mathcal T_N)(\eta)},\notag\\
 \label{THE_DECOUP_LLT}
\end{align}
where $D_N$ is a cutting level to be specified which will allow to balance the contribution for the terms \textcolor{black}{$\mathcal B_N(\eta) $  and $\mathcal T_N(\eta) $} corresponding respectively to the bulk and tails of the Fourier-Helgasson integral.
\begin{trivlist}
\item[-] \textbf{Contribution of the bulk}.
We first concentrate on the difference of the two Fourier transforms in the bulk, i.e. for $\lambda\le D_N $. Namely, write:
\begin{align}
&\Big| (\hat f_{\frac 1{\sqrt N}\otimes Z}(\lambda))^N-\exp(-\frac{(\rho^2+\lambda^2)t}{2})\Big|\notag\\
=&\exp(-\frac{(\rho^2+\lambda^2)t}{2})\times \Big|\exp\left(N \ln\Big(\hat f_{\frac 1{\sqrt N}\otimes Z}(\lambda)\Big)+\frac{(\lambda^2+\rho^2)t}2 \right) -1\Big|\notag\\
\le &\exp(-\frac{(\rho^2+\lambda^2)t}{2})\Big|N \ln\Big(\hat f_{\frac 1{\sqrt N}\otimes Z}(\lambda)\Big)+\frac{(\lambda^2+\rho^2)t}2 \Big|\exp\Big(\Big|N \ln\Big(\hat f_{\frac 1{\sqrt N}\otimes Z}(\lambda)\Big)+\frac{(\lambda^2+\rho^2)t}2 \Big|\Big),\label{PREAL_BULK}
\end{align}
using that for all $x\in \mathbb C$, $ |e^x-1|\le |x|\exp(|x|)$ for the last inequality.

Write now:
\begin{align}
&\Big|N \ln\Big(\hat f_{\frac 1{\sqrt N}\otimes Z}(\lambda)\Big)+\frac{(\lambda^2+\rho^2)t}2\Big|=\Big|N\ln\Big(\hat f_{\frac 1{\sqrt N}\otimes Z}(0)\Big( 1-(1-\frac{\hat f_{\frac1{\sqrt N} \otimes Z}(\lambda)}{\hat f_{\frac1{\sqrt N}\otimes Z}(0)})\Big)\Big)+\frac{(\lambda^2+\rho^2)t}2\Big|\notag\\
\le & |N\ln\Big(\hat f_{\frac 1{\sqrt N}\otimes Z}(0)\Big)+\frac{\rho^2}2t|+\frac{\lambda^2}2 | t-N V_{\frac 1{\sqrt N}\otimes Z}|+N |(\ln\Big( 1-(1-\frac{\hat f_{\frac1{\sqrt N}\otimes Z}(\lambda)}{\hat f_{\frac 1{\sqrt N}\otimes Z}(0)})\Big)+\frac{\lambda^2}2V_{\frac 1{\sqrt N}\otimes Z}|\notag\\
\le &|N\ln\Big(\hat f_{\frac{1}{\sqrt N}\otimes Z}(0)\Big)+\frac{\rho^2}2t|+\frac{\lambda^2}2 | t-V_{S_{N}}|+ N\Big(\sum_{r=2}^{\infty}r^{-1}|1-\frac{\hat f_{\frac1{\sqrt N}\otimes Z}(\lambda)}{\hat f_{\frac1{\sqrt N}\otimes Z}(0)}|^r +|-1+\frac{\hat f_{\frac1{\sqrt N}\otimes Z}(\lambda)}{\hat f_{\frac1{\sqrt N}\otimes Z}(0)}+\frac{\lambda^2}2V_{\frac1{\sqrt N}\otimes Z}\Big|\Big)\notag\\
=:&\mathcal R_{N,1}+\mathcal R_{2,N}+\mathcal R_{3,N}.\label{LE_CTR_POUR_LE_LOG}
\end{align}
We can write similarly to \eqref{TAYLOR_FONC_CAR}, \textcolor{black}{expanding again up to order 4},  that 
 \begin{align}\label{TAYLOR_FONC_CAR_BIS}
\frac{\hat f_{\frac1{\sqrt N}\otimes Z}(\lambda)}{\hat f_{\frac1{\sqrt N}\otimes Z}(0)}=1-\frac{\lambda^2}{2} V_{\frac 1{\sqrt N}\otimes Z}+\frac{\lambda^4}{6}\int_0^1 d\delta (1-\delta)^3\frac{\hat f_{\frac 1{\sqrt N}\otimes Z}^{(4)}(\delta \lambda)}{\hat f_{\frac 1{\sqrt N}\otimes Z}(0)}=:1-\frac{\lambda^2}{2} V_{\frac 1{\sqrt N}\otimes Z}+\frac{\lambda^4}{6} R_{N}.
 \end{align}
 From \eqref{EIG_FUNC_GEO} it follows that $\max_{\delta\in [0,1]}|\varphi_{\delta\lambda}^{(4)}(\tanh (\frac \eta 2))|\le \frac {\eta^2}2|\varphi_0^{(2)}(\tanh(\frac \eta 2))| $ and therefore
 \begin{equation}
\label{CTR_RESTE_LLT_1}
| R_{N}|\le \E[|Z_{\frac1{\sqrt N},R}|^2\frac{\varphi_0^{(2)}\big(\tanh(\frac{Z_{\frac 1{\sqrt N},R}}2)\big)}{\hat f_{\frac 1{\sqrt N}\otimes Z}(0)}],
 \end{equation}
 where $Z_{\frac1{\sqrt N},R} $ is a random variable with law $\mu_{\frac 1{\sqrt N}\otimes Z,R} $ introduced in \eqref{GEO_RAD} corresponding to the geodesic polar coordinates of $\frac 1{\sqrt N}\otimes Z$. 
 It follows that:
 \begin{align}
&\E[|Z_{\frac 1{\sqrt N},R}|^2\frac{|\varphi_0^{(2)}\big(\tanh(\frac{Z_{\frac 1{\sqrt N},R}}2)\big)|}{\hat f_{\frac 1{\sqrt N}\otimes Z}(0)}]\notag\\
=&\Omega_{n-1}\int_{0}^{+\infty} \eta^2 \frac{|\varphi_0^{(2)}(\tanh(\frac \eta 2))|}{\hat f_{ \frac 1{\sqrt N}\otimes Z}(0)} f_{\frac 1{\sqrt N}\otimes Z}(\tanh (\frac \eta2))\sinh(\eta)^{n-1} d\eta\notag\\
\underset{\eqref{SCALED_DENS_PROP}}{=}&\Omega_{n-1}\int_{0}^{+\infty} \eta^2 \frac{|\varphi_0^{(2)}(\tanh(\frac \eta 2))|}{\hat f_{\frac 1{\sqrt N}\otimes Z}(0)} \sqrt Nf_{ Z}(\tanh (\sqrt N\frac \eta2))\Big(\frac{\sinh(\sqrt N \eta)}{\sinh(\eta)} \Big)^{n-1}\sinh(\eta)^{n-1} d\eta\notag\\
=&\Omega_{n-1}\frac1{\hat f_{\frac 1{\sqrt N}\otimes Z}(0)} \int_0^{+\infty} \frac{\tilde \eta^2}{N}|\varphi_0^{(2)}(\tanh(\frac{\tilde \eta}{2N^{\frac 12}}))|f_{ Z}(\tanh(\frac{\tilde \eta}2)) \sinh(\tilde \eta)^{n-1} d\tilde \eta\notag\\
\le &C\Omega_{n-1}\frac1{\hat f_{\frac 1{\sqrt N}\otimes Z}(0)} \int_0^{+\infty} \frac{\tilde \eta^4}{N^{2}}|\varphi_0(\tanh(\frac{\tilde \eta}{2N^{\frac 12}}))|f_{ Z}(\tanh(\frac{\tilde \eta}2)) \sinh(\tilde \eta)^{n-1} d\tilde \eta\le \frac{C}{N^{2}}.\label{THE_B_N}
 \end{align}
It therefore holds that there exists $C$ s.t. 
\begin{equation}
\label{CTR_RESTE_CUBE}
| R_{N}|\le   \frac{C}{N^{2}}.
\end{equation}
Set now,
\begin{align*}
A_N:=(V_{\frac 1{\sqrt N}\otimes Z})^{-\frac 14},\ B_N:=\Big(N\E\Big[|Z_{\frac 1{\sqrt N},R}|^2\frac{|\varphi_0^{(2)}\big(\tanh(\frac{Z_{\frac 1{\sqrt N},R}}2)\big)|}{\hat f_{\frac1{\sqrt N}\otimes Z}(0)}\Big]\Big)^{-\frac 14},\ C_N:=|t-V_{S_N}|^{-\frac 12}.
\end{align*}
In the previous setting, from \eqref{VAR_CTR}, \eqref{THE_B_N}, \eqref{CONV_RATE_FOR_VARIANCE},
\begin{equation}\label{CTR_RATES}
A_N^{-2}+ B_N^{- 2}+C_N^{- 1}\le CN^{-\frac 12}.
\end{equation}
With these notations, it is clear from the definitions in \eqref{LE_CTR_POUR_LE_LOG} \textcolor{black}{and \eqref{CTR_RATES}} that
\begin{equation}\label{CTR_R2N}
 \mathcal R_{2,N}\le \frac{\lambda^2}2C_N^{-2}\le C\frac{\lambda^2}N.
 \end{equation}
On the other hand,
\begin{align*}
\mathcal R_{3,N} \le N \Bigg(|1-\frac{\hat f_{\frac1{\sqrt N}\otimes Z}(\lambda)}{\hat f_{\frac1{\sqrt N}\otimes Z}(0)}|\sum_{r=2} r^{-1}|1-\frac{\hat f_{\frac1{\sqrt N}\otimes Z}(\lambda)}{\hat f_{\frac1{\sqrt N}\otimes Z}(0)}|^{r-1}+\frac{\lambda^4}{6}\E\left[Z_{\frac 1{\sqrt N},R}^2\Big|\frac{\varphi_0^{(2)}\big(\tanh (\frac{Z_{\frac 1{\sqrt N},R}} 2)\big)}{\hat f_{\frac1{\sqrt N}\otimes Z}(0)} \Big| \right]\Bigg) =:\mathcal R_{3,N}^{(1)}+\mathcal R_{3,N}^{(2)}.\label{T_R_III}
 \end{align*}
 Assume now that $|\lambda|\le D_N=
 \min_{} (\frac{A_N}{\sqrt 3},B_N,\frac{C_N}{\sqrt 3})
 $.
It readily follows from \eqref{THE_B_N} that:
 \begin{align*}
\mathcal R_{3,N}^{(2)}\le \frac{\lambda^4}6B_{N}^{-4}\le C\frac{\lambda^4} N.
 \end{align*}

We also have from \eqref{TAYLOR_FONC_CAR_BIS}, \eqref{CTR_RESTE_LLT_1} that:
 \begin{align*}
|\frac{\hat f_{\frac1{\sqrt N}\otimes Z}(\lambda)}{\hat f_{\frac1{\sqrt N}\otimes Z}(0)}-1|=\frac{\lambda^2}{2} V_{\frac 1{\sqrt N}\otimes Z}+\frac{\lambda^4}{6} R_{N}\le \frac 16+\frac 1{6} =\frac 13.
 \end{align*}
\textcolor{black}{We point out that this is a very coarse estimate which is only given to ensure the convergence of the series appearing for the term $\mathcal R_{3,N}^{(1)}$}.
 
 Then,
 \begin{align*}
\mathcal R_{3,N}^{(1)}&\le N|1-\frac{\hat f_{\frac1{\sqrt N}\otimes Z}(\lambda)}{\hat f_{\frac1{\sqrt N}\otimes Z}(0)}|^{2}\sum_{r=2} r^{-1}|1-\frac{\hat f_{\frac1{\sqrt N}\otimes Z}(\lambda)}{\hat f_{\frac1{\sqrt N}\otimes Z}(0)}|^{r-2}\le  CN (\lambda^4 V_{\frac 1{\sqrt N}\otimes Z}^{2}+\lambda^{8}  R_{N}^{2})\sum_{\tilde r\in \N} (2+\tilde r)^{-1} (\frac 13)^{\tilde r} \\
&\le C\Big( \lambda^4 NA_N^{- 8}+\frac{\lambda^{4}}{N^{2}}\times \lambda^4B_N^{-4}\Big)
\textcolor{black}{\le \lambda^4N^{-1}},
 \end{align*}
 exploiting \textcolor{black}{\eqref{CTR_RATES}, which gives that $N A_N^{-4}\le C$ and $A_N^{-4}\le C/N $,  as well as $\lambda^4B_N^{-4}\le 1 $} for the last inequality.  \textcolor{black}{We carefully mention that it is the  term involving the variance which is responsible for the above rate}.
 
 We have thus established that 
 \begin{equation}\label{CTR_R3N}
 \mathcal R_{3,N}
  \textcolor{black}{\le C\lambda^4N^{-1}}.
 \end{equation}
 It remains to handle the contribution $\textcolor{black}{\mathcal R_{1,N}} $ in \eqref{LE_CTR_POUR_LE_LOG}. Write:
 \begin{align}
\mathcal R_{1,N}&= |N\ln\Big(\hat f_{\frac1{\sqrt N}\otimes Z}(0)\Big)+\frac{\rho^2}2t|=|N  \ln\big(1-(1-\hat f_{\frac1{\sqrt N}\otimes Z}(0))\big)+\frac{\rho^2}2t|\nonumber\\
&=\big|-N (1-\hat f_{\frac1{\sqrt N}\otimes Z}(0))+\frac{\rho^2}2t- N(1-\hat f_{\frac1{\sqrt N}\otimes Z}(0))\sum_{r=2}^\infty r^{-1}(1-\hat f_{\frac1{\sqrt N}\otimes Z}(0))^{r-1}\big|.\label{MAIN_0}
 \end{align} 
Write now from \eqref{FUNC_MEAN_SCALED}:
\begin{align*}
\textcolor{black}{\hat f_{\frac1{\sqrt N}\otimes Z}(0)-1}=\Omega_{n-1}\int_0^{+\infty} f_Z(\tanh{\frac{\tilde \eta}{2}}) \big(\textcolor{black}{\varphi_0(\tanh(\frac{\tilde \eta}{2\sqrt N}))-1}\big)\sinh(\tilde \eta)^{n-1} d\tilde \eta.
\end{align*}
Setting now, $g(\zeta)=\varphi_0(\tanh( \frac{ \zeta}{2})) $, we perform a fourth order Taylor expansion which yields:
\begin{align*}
\textcolor{black}{\hat f_{\frac1{\sqrt N}\otimes Z}(0)-1}=&\Omega_{n-1}\int_0^{+\infty} f_Z(\tanh{\frac{\tilde \eta}{2}}) \Big(g^{(1)}(0)\frac{\tilde \eta}{ N^{\frac 12}}+\frac 12g^{(2)}(0)\frac{\tilde \eta^2}{N}+\frac 16g^{(3)}(0)\frac{\tilde \eta^3}{N^{\frac 32}} \\
&+\frac 16\int_0^1 g^{(4)}(\gamma \tilde \eta) (1-\gamma)^3d\gamma \frac{\tilde \eta^4}{N^2}\Big)\sinh(\tilde \eta)^{n-1} d\tilde \eta,
\end{align*}
where $g^{(i)} $ denotes the $ i^{{\rm th}}$ derivative of $g$. It now remains to compute the derivatives of $g$. We will actually prove that, the derivatives of odd index are zero at zero and we aim at retrieving for the second order derivative a contribution, which once summed in \eqref{MAIN_0} will precisely exactly give the contribution $-\frac {\rho^2}2t $.

To compute the derivative of $g$ let us recall from \eqref{EIG_FUNC_GEO} that:
 \begin{align*}
g(\eta)= \varphi_0 (\tanh(\frac \eta 2))=
 \tilde C_n\sinh(\eta)^{2-n}\int_0^\eta ds(\cosh(\eta)-\cosh(s))^{\frac {n-3}2},\ 
\tilde C_n =
\frac{2^{\frac {n-1}2}\Gamma\left(\frac n2\right)}{\sqrt \pi \Gamma\left(\frac{n-1}2\right)},
 \end{align*}
which also equivalently rewrites, see e.g. \cite{anke:17},
\begin{align*}
g(\eta)=\varphi_0 (\tanh(\frac \eta 2))=k_n\int_0^\pi (\cosh (\eta)-\sinh(\eta)\cos(\theta))^{-\rho}\sin(\theta)^{2\rho-1}d\theta,\ k_n:=\frac{\Gamma\left(\frac n2\right)}{\sqrt \pi \Gamma\left(\frac{n-1}2\right)}=\frac{\tilde C_n}{2^{\frac{n-1}2}}.
\end{align*}
From the above expression we indeed derive:
\begin{align*}
g^{(1)}(0)=&- k_n\rho\int_0^\pi  (\cosh (\eta)-\sinh(\eta)\cos(\theta))^{-(\rho+1)}(\sinh(\eta)-\cosh(\eta)\cos(\theta)) \sin(\theta)^{2\rho-1}d\theta|_{\eta=0}\\
=&k_n\rho\int_0^\pi \cos(\theta) \sin(\theta)^{n-2}\textcolor{black}{d\theta}=0,\\
g^{(2)}(0)=&k_n\rho(\rho+1)\int_0^\pi  (\cosh (\eta)-\sinh(\eta)\cos(\theta))^{-(\rho+2)}(\sinh(\eta)-\cosh(\eta)\cos(\theta))^2 \sin(\theta)^{2\rho-1}d\theta|_{\eta=0}\\
&-k_n\rho\int_0^\pi (\cosh (\eta)-\sinh(\eta)\cos(\theta))^{-(\rho+1)}(\cosh(\eta)-\sinh(\eta)\cos(\theta)) \sin(\theta)^{2\rho-1}d\theta|_{\eta=0}\\
=&k_n\rho(\rho+1)\int_0^\pi \cos(\theta)^2\sin(\theta)^{n-2}d\theta-k_n\rho\int_0^\pi \sin(\theta)^{n-2}d\theta\\
=&-k_n\rho(\rho+1)I(n)+k_n\rho^2I(n-2),\ \forall m\in \N,\ I(m):=\int_0^\pi \sin(\theta)^{m}d\theta,\\
g^{(3)}(0)=&0.
\end{align*}
Hence,
\begin{align*}
\textcolor{black}{\hat f_{\frac1{\sqrt N}\otimes Z}(0)-1}=\frac{k_n(-\rho(\rho+1)I(n)+\rho^2I(n-2))}{2N}\int_0^{+\infty} \tilde \eta^2\mu_{Z,R}(d\eta)+O(\frac 1{N^2}).
\end{align*}
Having in mind the explicit expression of $t$ in \eqref{VAR_LIM}, to investigate the difference of the first l.h.s term in \eqref{MAIN_0} it remains to specify properly the constants involved (in both the definition of $t$ and the above expression).

From \cite{grad:ryzh:07}, formula 3.631 (8), p. 386, it holds that for $\nu\in \C, \ \Re(\nu)>0 $,
$$\int_0^\pi \sin(\theta)^{\nu-1}d\theta=\frac{\pi}{2^{\nu-1}\nu B(\frac{\nu+1}{2},\frac{\nu+1}{2})},$$
where $B(\cdot,\cdot) $ denotes the $\beta $-function. Therefore taking respectively $\nu=n+1 $ and $\nu=n-1 $, recalling as well that $B(x,y)=\frac{\Gamma(x)\Gamma(y)}{\Gamma(x+y)} $, one derives
\begin{align*}
I(n)=\frac{\pi \Gamma(n+2)}{2^n (n+1)\Gamma(\frac{n+2}2)^2}=\frac{\pi n!}{2^n\Gamma(\frac{n+2}2)^2},\ I(n-2)=\frac{\pi (n-2)!}{2^{n-2}\Gamma(\frac{n}2)^2}.
\end{align*}
Recalling from \eqref{TF_HK_1} that $\rho=\frac{n-1}2 $, this in turn gives:
\begin{align*}
-k_n\rho(\rho+1)I(n)&=-\frac{\Gamma(\frac n2)}{\sqrt \pi \Gamma(\frac {n-1}2)}\frac{n-1}2\frac{n+1}2\frac{\pi n!}{2^n\left(\frac{n}{2}\right)^2\Gamma(\frac{n}2)^2}\\
&=-\frac{n^2-1}{2^n n^2 \Gamma(\frac{n-1}{2}) \Gamma(\frac{n}{2})}\sqrt \pi n!=-\frac{n^2-1}{4\Gamma(n-1)}\frac{n!}{n^2}=-\frac{(n^2-1)}{4}\frac{n-1}{n}=-\frac{\rho^2(n+1)}{n}\\
\end{align*}
using the Legendre duplication formula for the third equality. We also similarly derive,
\begin{align*}
k_n\rho^2I(n-2)&=\frac{\Gamma(\frac n2)}{\sqrt \pi \Gamma(\frac {n-1}2)}\Big(\frac{n-1}2 \Big)^2\frac{\pi (n-2)!}{2^{n-2}\Gamma(\frac{n}2)^2}=\frac{(n-2)!}{\Gamma(n-1)}\Big(\frac{n-1}2 \Big)^2=\rho^2.
\end{align*}
We have thus established:
\begin{align*}
1-\hat f_{\frac1{\sqrt N}\otimes Z}(0)=\textcolor{black}{\frac{\rho^2}{n}}\frac{1}{2N}\int_0^{+\infty} \tilde \eta^2\mu_{Z,R}(d\eta)+O(\frac 1{N^2}),
\end{align*}
which eventually gives in \eqref{MAIN_0}
\begin{align*}
\mathcal R_{1,N}=-\frac{\rho^2}{2}\frac{1}{n}\int_0^{+\infty} \tilde \eta^2\mu_{Z,R}(d\eta)+O(\frac 1{N})+\frac{\rho^2}2t.
\end{align*}
This eventually gives \textcolor{black}{from the very definition of $t$ (see e.g. \eqref{VAR_LIM})}:
\begin{align} \label{CTR_R1N}
\mathcal R_{1,N}=O(\frac 1N).
\end{align}

Therefore, plugging \eqref{CTR_R1N},  \eqref{CTR_R3N}, \eqref{CTR_R2N} into \eqref{LE_CTR_POUR_LE_LOG} and \eqref{PREAL_BULK} we thus derive, choosing $D_N=N^{\frac 14}$ so that in particular $\lambda^4/N\le 1 $:
\begin{align}
|\textcolor{black}{\mathcal B_N(\eta)}|\le& \textcolor{black}{C}\int_{0<\lambda \le D_N }\exp(-\frac{(\rho^2+\lambda^2)t}{2}) \frac 1N\big(1+\lambda^4 \big)\exp\Big( \frac 1N\big(1+\lambda^4 \big)\Big)|\varphi_\lambda (\tanh(\frac \eta 2))||\c(\lambda)|^{-2}d\lambda \notag\\
\le &\frac C{t^2N} \int_{0<\lambda \le D_N }\exp(-\frac{(\rho^2+\lambda^2)t}{4}) |\varphi_\lambda (\tanh(\frac \eta 2))| |\c(\lambda)|^{-2}d\lambda\notag\\
\le &\frac C{t^2N} \int_{\R^+ }\exp(-\frac{(\rho^2+\lambda^2)t}{4}) |\varphi_\lambda (\tanh(\frac \eta 2))| |\c(\lambda)|^{-2}d\lambda.\label{CTR_BULK_PREAL}
\end{align}

From \eqref{EIG_FUNC_GEO} and the definition of the Harish-Chandra function in \eqref{HC}, see e.g. Remark 2.7 in \cite{anke:17} for its asymptotic behavior, we get 
 \begin{equation}\label{ASYMP_HARISH_CHANDRA_ET_PHI}
\ |\varphi_\lambda (\tanh(\frac \eta 2))|\le \varphi_0(\tanh(\frac \eta 2))\le C,\ |\c(\lambda)|^{-2}\le C(\lambda^2\I_{\lambda\le C}+\lambda^{\textcolor{black}{n-1}}\I_{\lambda>C}),\ 
 \end{equation}
from which we eventually derive:
\begin{align}
|\textcolor{black}{\mathcal B_N(\eta)}|\le& \frac C{t^{2}N}\textcolor{black}{\Big(\frac 1{t^{\frac 12}}\wedge \frac{1}{t^{\frac n2}} \Big)}. \label{CTR_BULK}
\end{align}
\item[-] \textbf{Contribution of the tails, general case.}
 For $\lambda >N^{1/4} $, we write:
 \begin{align}
\textcolor{black}{\mathcal T_N(\eta)}\le& \Big|\int_{N^{1/4}}^{+\infty} \Big(\prod_{j=1}^{N}\hat f_{\frac1{\sqrt N}\otimes Z^j}(\lambda)\Big) \varphi_\lambda (\tanh(\frac \eta 2))|\c(\lambda)|^{-2}d\lambda\Big|+\Big|\int_{N^{1/4}}^{+\infty}  \exp(-\frac{(\rho^2+\lambda^2)t}{2}) \varphi_\lambda (\tanh(\frac \eta 2))|\c(\lambda)|^{-2} d\lambda\Big|\notag\\
=:&\textcolor{black}{(\mathcal T_N^1+\mathcal T_N^2)(\eta)}.\label{DECOUP_TAILS_GEN}
\end{align}

Let us first consider the term $\textcolor{black}{\mathcal T_N^2}$ which can be handled globally. We get again from \eqref{ASYMP_HARISH_CHANDRA_ET_PHI} \textcolor{black}{(similarly to \eqref{CTR_BULK})}:
\begin{align}
\textcolor{black}{\mathcal T_N^2(\eta)}\le &\exp(-\textcolor{black}{\frac{N^{\frac 12}t}{4}})\int_{0}^{+\infty}  \exp(-\frac{(\rho^2+\lambda^2)t}{4}) | \varphi_\lambda (\tanh(\frac \eta 2))| |\c(\lambda)|^{-2} d\lambda\notag\\
\le & \frac{C}{t^2N }\textcolor{black}{\Big(\frac 1{t^{\frac 12}}\wedge \frac{1}{t^{\frac n2}} \Big)},\label{CTR_TAILS_HK}
 \end{align}
 which gives an upper bound similar to the one obtained for the bulk in \eqref{CTR_BULK}.

Let us now turn to $\textcolor{black}{\mathcal T_N^1(\eta),}$ and split as follows:
 \begin{align}
\textcolor{black}{\mathcal T_N^1(\eta)}\le&  \Big|\int_{N^{\frac 14}}^{c_0N^{\frac 12}}\Big(\hat f_{\frac1{\sqrt N}\otimes Z}(\lambda)\Big)^N\varphi_\lambda (\tanh(\frac \eta 2))|\c(\lambda)|^{-2}d\lambda\Big|\notag\\
&+\Big|\int_{c_0N^{\frac 12}}^{\infty}\Big(\hat f_{\frac1{\sqrt N}\otimes Z}(\lambda)\Big)^N \varphi_\lambda (\tanh(\frac \eta 2))|\c(\lambda)|^{-2}d\lambda\Big|=:\textcolor{black}{(\mathcal T_N^{11}+\mathcal T_N^{12})(\eta)},\label{SPLIT_TAILS}
\end{align}
for some small enough constant $c_0$ to be specified. 

Let us now recall that from Lemma 2.1 of \cite{liu:peng:04}, for $x\in \B^n$ and $\lambda\in \C $
\begin{align*}
\int_{\S^{n-1}}\frac{\sigma(d\zeta)}{|x-\zeta|^{2\lambda}}={}_2F_1(\lambda,\lambda-\frac n2+1,\frac n2,\|x\|^2),
\end{align*}
where ${}_2F_1$ stands for the hypergeometric function and $\textcolor{black}{\sigma=\Lambda/\Omega_{n-1}}$ for the \textcolor{black}{normalized} surface measure on $\S^{n-1} $ \textcolor{black}{(i.e. $\sigma(\mathbb S^{n-1})=1$)}. 
We now recall, \textcolor{black}{see e.g. formula 15.1.1. in \cite{grad:ryzh:07}}, \textcolor{black}{that for $a,b,c,z$, where $c $ is not a negative integer and $ |z|<1$,
\begin{equation}\label{SERIES}
{}_2F_{1}(a,b,c,z)= \sum_{q=0}^{+\infty} \frac{(a)_q(b)_q}{(c)_q}\frac{z^q}{q!},
\end{equation}
denoting for $d\in \{a,b,c\}$,
$$(d_q)=\begin{cases} 
1,\ q=0,\\
\prod_{j=0}^{q-1}(d+j),\ q\neq 0.
\end{cases}
$$
The series gives an analytic function if $|z|<1 $ and can be continued on the complex plane (cut on $[1,+\infty))$.
}

From \eqref{DEF_Z}, \eqref{EIG_SPHERE_HARMO}, we thus get:
\begin{align*}
\varphi_\lambda(x)=(1-\|x\|^2)^{\rho+i\frac\lambda2}\int_{\S^{n-1}}\frac{\sigma(d\zeta)}{\|x-\zeta\|^{2(\rho+i\frac \lambda 2)}}=(1-\|x\|^2)^{\rho+i\frac\lambda2}{}_2F_{1}(\rho+i\frac\lambda 2,\frac 12+i\frac \lambda 2,\frac n2,\|x\|^2).
\end{align*}
Recalling the Pfaffian identity (see e.g. (2.15) in \cite{liu:peng:09})
$${}_2F_{1}(a,b,c,z)=(1-z)^{-a}{}_2F_{1}(a,c-b,c,\frac z{z-1}),$$
we get for $z=\|x\|^2=\tanh(\frac\eta 2)^2 $, $a=\rho+i\frac \lambda 2,b=\frac 12+i\frac \lambda 2,c=\frac n2$,

$$\varphi_\lambda(\tanh(\frac \eta 2))={}_2F_1(\rho+i\frac \lambda 2, \rho-i\frac \lambda 2,\frac n2, -\sinh ^2(\frac \eta 2)).$$
\textcolor{black}{
We therefore get from \eqref{SERIES}: 
\begin{align}
\varphi_\lambda(\tanh(\frac \eta 2))=&1+\sum_{q=1}^{\infty}\frac{\prod_{j=0}^{q-1}\big((\rho+i\frac \lambda 2+j)(\rho-i\frac \lambda 2+j) \big)}{\prod_{j=0}^{q-1}(\frac n2+j)} \frac{(-1)^q(\sinh(\frac{\eta}{2}))^{2q}}{q!}\notag\\
=&1+\sum_{q=1}^{\infty}\frac{\prod_{j=0}^{q-1}\big((\rho+j)^2+\frac{\lambda^2}{4}) \big)}{\prod_{j=1}^{q}(m+j)} \frac{(-1)^q(\sinh(\frac{\eta}{2}))^{2q}}{q!},m=\rho-\frac 12=\frac n2-1,\notag\\
=&1+\sum_{q=1}^{\infty}\frac{\prod_{j=0}^{q-1}\big((2\frac{\rho+j}{\lambda})^2+1) \big)}{4^q\prod_{j=1}^{q}(m+j)} \frac{(-1)^q(\lambda\sinh(\frac{\eta}{2}))^{2q}}{q!}\notag\\
=&1+\sum_{q=1}^\infty\frac{(-1)^q(\lambda \sinh(\frac \eta 2))^{2q}\Bigg[1+\Big(\frac{n-1}{\lambda} \Big)^2\Bigg]\times \cdots \times \Bigg[1+\Big(\frac{n+2q-3}{\lambda} \Big)^2 \Bigg]}{4^q q!(m+1)(m+2)\cdots(m+q)}\label{EXP_FONC_SPEC}\\
=&1+\sum_{q=1}^\infty\frac{(-1)^q(\lambda \sinh(\frac \eta 2))^{2q}\Bigg[1+\Big(\frac{2m+1}{\lambda} \Big)^2\Bigg]\times \cdots \times \Bigg[1+\Big(\frac{2m+2q-1}{\lambda} \Big)^2 \Bigg]}{4^q q!(m+1)(m+2)\cdots(m+q)}.\notag
\end{align}
}

We emphasize that the representation \eqref{EXP_FONC_SPEC} will be actually used to investigate the behavior of the quantity $\Big(\hat f_{\frac1{\sqrt N}\otimes Z}(\lambda)\Big)^N $ in \eqref{SPLIT_TAILS} for the \textcolor{black}{term $\mathcal T_N^{11}(\eta)$}. From \eqref{TF_H}, \textcolor{black}{\eqref{SCALED_DENS_PROP}}, and since on the considered integration interval $\lambda\in [N^{\frac 14},c_0N^{\frac 12}] $, this actually means that we will consider \eqref{EXP_FONC_SPEC} for a \textit{small} spatial argument. Namely,
\begin{align}\label{EXP_TF_DENS}
\hat f_{\frac1{\sqrt N}\otimes Z}(\lambda)=\Omega_{n-1}\int_0^\infty \varphi_\lambda(\textcolor{black}{\tanh(\frac{\eta}{2\sqrt N})}) f(\tanh(\frac \eta 2) )\sinh(\eta)^{n-1} d\eta =\int_0^\infty \varphi_\lambda\big(\textcolor{black}{\tanh(\frac{\eta}{2\sqrt N})}\big)   \mu_{Z,R}(d\eta).
\end{align}
Let us turn to the estimation of the r.h.s. in \eqref{EXP_FONC_SPEC}. \textcolor{black}{We will actually establish that}, \textcolor{black}{for sufficiently large $q$}:
\begin{equation}
\frac{\Bigg[1+\Big(\frac{\textcolor{black}{2m+1}}{\lambda} \Big)^2\Bigg]\times \cdots \times \Bigg[1+\Big(\frac{\textcolor{black}{2m+2q-1}}{\lambda} \Big)^2 \Bigg]}{4^q q!(m+1)(m+2)\cdots(m+q)}=\frac{m!\Bigg[1+\Big(\frac{\textcolor{black}{2m+1}}{\lambda} \Big)^2\Bigg]\times \cdots \times \Bigg[1+\Big(\frac{\textcolor{black}{2m+2q-1}}{\lambda} \Big)^2 \Bigg]}{4^q q!(m+q)!}\le C(m)4^{-q}.\label{U_BOUND_QUOTIENT}
\end{equation}
To establish the above inequality we first observe that
\begin{align}
\Bigg[1+\Big(\frac{\textcolor{black}{2m+1}}{\lambda} \Big)^2\Bigg]\times \cdots \times \Bigg[1+\Big(\frac{\textcolor{black}{2m+2q-1}}{\lambda} \Big)^2 \Bigg]\le& \exp\Bigg(q \ln\Big(1+\Big(\frac{\textcolor{black}{2m+2q-1}}{\lambda} \Big)^2 \Big)\Bigg)\notag\\
\le &\exp\Bigg( q\ln\Big(q^2(\frac 1{q^2}+\frac{C(m)}{\lambda^2})\Big)\Bigg)\le \exp(q\ln( [\frac{q}{e}]^2)) \notag \\
=&\exp(-2q)\exp (2q\ln q )),
\label{NUM}
\end{align}
where the last inequality holds for \textcolor{black}{$q\ge 3$} and \textcolor{black}{$N  $ large enough (recall $\lambda\in [N^{1/4},c_0N^{1/2}] $)}. On the other hand, from Stirling's formula we get
:
\begin{align}
q!(m+q)!&=2\pi \textcolor{black}{(q(m+q))^{\frac 12}}e^{q\ln q}e^{(m+q)\ln(m+q)}e^{-(m+2q)}e^{\frac1{12q}(\theta_{1q}+\theta_{2q})}, \ \theta_{iq}\in (0,1),\ i\in \{1,2\},\notag\\
&>2\pi e^{-m}e^{2q\ln q }e^{-2q},\label{DEN}
\end{align}
for sufficiently large $q$. From \eqref{DEN} and \eqref{NUM}, we derive \eqref{U_BOUND_QUOTIENT}.

On the other hand, on the considered range for $\lambda $, i.e. $\lambda \in [N^{\frac 14},c_0N^{\frac 12} ]$ we have:
$$\lambda \sinh(\frac \eta{\textcolor{black}{2}\sqrt N}) \le \lambda \frac \eta{\sqrt N}\le c_0\bar R<1,$$
choosing $c_0$ small enough and exploiting that $\mu_Z $ in \eqref{EXP_TF_DENS} is compactly supported on $[0,\bar R] $. We then derive that on the considered range
:
\begin{align}
\varphi_\lambda\big(\textcolor{black}{\tanh(\frac \eta{2\sqrt N})}\big)\le 1- \frac{\textcolor{black}{\lambda^2}\Big(\frac \eta{\sqrt N}\Big)^2}{\textcolor{black}{8}(m+1)}.\label{BD_PHI_LAMBDA_FIRST_AFTER_BULK}
\end{align}
Hence, from \eqref{EXP_TF_DENS} and  \eqref{BD_PHI_LAMBDA_FIRST_AFTER_BULK} we obtain:
\begin{align*}
|\hat f_{\frac 1{\sqrt N}\otimes Z}(\lambda)|\le (1-c\frac{\lambda^2}N \int_{\mathbb B^n} \eta^2\mu_{Z,R}(d\eta)),\ |\hat f_{\frac 1{\sqrt N}\otimes Z}(\lambda)|^N\le \exp(-c\lambda^2\int_{\mathbb B^n}\eta^2\mu_{Z,R}(d\eta))=\exp(-c\lambda^2t),
\end{align*}
\textcolor{black}{up to a modification of $c$ for this very last inequality}.
We then eventually get from \eqref{SPLIT_TAILS}, \textcolor{black}{analogously to \eqref{CTR_TAILS_HK}}, that:
\begin{align}
\textcolor{black}{\mathcal T_N^{11}(\eta)}&\le C\exp(-c\frac{(-\rho^2+\textcolor{black}{N^{\frac 12}})t}{ 2})\int_{N^{\frac 14}}^{c_0N^{\frac 12}}  \exp(-c\frac{(\rho^2+\lambda^2)t}{ 2})|\varphi_\lambda (\tanh(\frac \eta 2))||\c(\lambda)|^{-2}d\lambda\notag\\
&\le \frac{\textcolor{black}{C}}{t^2N }\textcolor{black}{\Big(\frac 1{t^{\frac 12}}\wedge \frac{1}{t^{\frac n2}} \Big)}. \label{CTR_T_N_11}
\end{align}

It now remains to consider the case $\lambda >c_0N^{\frac 12} $ to handle $\textcolor{black}{\mathcal T_N^{12}(\eta)} $ in \eqref{SPLIT_TAILS}. To this end we exploit, \textcolor{black}{see e.g. Lemma 5.1 formula (5.3) in \cite{petr:92}}, that
\begin{equation}\label{ASYM_VARPHI_LAMBDA}
 \exists c\in (0,1), \forall \lambda,r\in \R_+^2,\ \lambda r>1, |\varphi_\lambda (\textcolor{black}{\tanh(r)})|<1-c.
\end{equation}
Write then,
\begin{align*}
\textcolor{black}{\mathcal T_N^{12}(\eta)}&=\Big|\int_{c_0N^{\frac 12}}^{\infty}\Big(\prod_{j=1}^{N}\hat f_{\frac1{\sqrt N}\otimes Z}(\lambda)\Big) \varphi_\lambda (\tanh(\frac \eta 2))|\c(\lambda)|^{-2}d\lambda\Big|\le 
\int_{c_0N^{\frac 12}}^{\infty}|\hat f_{\frac 1{\sqrt N}\otimes Z}(\lambda)|^N |\varphi_\lambda (\tanh(\frac \eta 2))| |\c(\lambda)|^{-2}d\lambda\\
&\le \frac 1 b \int_{bc_0N^{\frac 12}}^{\infty}|\hat f_{\frac 1{\sqrt N}\otimes Z}(\frac \lambda b)|^N |\varphi_{\frac \lambda b} (\tanh(\frac \eta 2))| |\c(\frac \lambda b)|^{-2}d\lambda,
\end{align*}
for some parameter $b>0$ to be specified.
Let us now write \textcolor{black}{from \eqref{EXP_TF_DENS}}:
\begin{align}
\hat f_{\frac 1{\sqrt N}\otimes Z}(\frac \lambda b)=\int_0^{2b\frac{\sqrt N}{\lambda}}\varphi_{\frac \lambda b}(\tanh(\frac{\eta}{2\sqrt N}))\mu_{Z,R}(d\eta)+\int_{2b\frac{\sqrt N}{\lambda}}^{\bar R}\varphi_{\frac \lambda b} (\tanh(\frac{\eta}{2\sqrt N}))\mu_{Z,R}(d\eta)=:I+II.\label{DECOUP_LAST_STEP_TF}
\end{align}
 For the  term $II$ in \eqref{DECOUP_LAST_STEP_TF}, observe that for $\eta> 2b \frac{\sqrt{N}}\lambda$ then $ \frac \lambda b \frac{\eta}{2\sqrt N }>1$ and by \eqref{ASYM_VARPHI_LAMBDA},
 $$II\le 1-c .$$
 For the term $I$ observe that $2b\frac{\sqrt N}{\lambda}\le 2b \frac{\sqrt N}{c_0\sqrt N}=\frac {2b}{ c_0} $. Since $\mu_{Z,R} $ has a density there is in particular no atom at 0 and therefore, since 
 $\textcolor{black}{|\varphi_{\frac \lambda b} (\tanh(\frac{\eta}{2\sqrt N}))| \le C}$ 
  on the considered integration range, we then derive $\int_0^{\frac{2b}{c_0}}|\varphi_{\frac \lambda b} (\tanh(\frac{\eta}{2\sqrt N}))| \mu_{Z,R}(d\eta)<c/2$ for $b$ small enough. Hence, plugging those controls into \eqref{DECOUP_LAST_STEP_TF}, we get:
 $$|\hat f_{\frac 1{\sqrt N}\otimes Z}(\frac \lambda b)|\le 1-c/2:=\delta(b)<1. $$
 Therefore,
 \begin{align*}
\textcolor{black}{\mathcal T_N^{12}(\eta)}\le& \frac C b \delta^{N-2}\int_{bc_0 N^{\frac 12}}^{\infty}|\hat f_{\frac 1{\sqrt N}\otimes Z}(\frac \lambda b)|^2|\varphi_{\frac \lambda b} (\tanh(\frac \eta 2))| |\c(\frac \lambda b)|^{-2}d\lambda\\
=&C\delta^{N-2}\int_{c_0 N^{\frac 12}}^{\infty}|\hat f_{\frac 1{\sqrt N}\otimes Z}( \lambda )|^2|\varphi_{ \lambda } (\tanh(\frac \eta 2))| |\c( \lambda )|^{-2}d\lambda.
 \end{align*}
 We can now use the Plancherel equality, see e.g. Theorem 6.14 in \cite{liu:peng:04} with $\vartheta=2-n$, 
 to derive:
 \begin{align*}
 \textcolor{black}{\mathcal T_N^{12}(\eta)}&\le C\delta^{N-2}\int_0^{\bar R} f_{\frac 1{\sqrt N}\otimes Z}^2(\tanh(\frac \eta 2)) \sinh^{n-1} (\eta)d\eta\\
 &\le C\delta^{N-2} \int_0^{\frac{\bar R}{N^{\frac 12}}}
N f_Z^2(\tanh(\frac{N^{\frac 12}\eta}{2}))\Big(\frac{\sinh( N^{\frac 12}\eta )}{\sinh ( \eta)}\Big)^{2(n-1)}\sinh^{n-1} (\eta)d\eta\\
&\le C\delta^{N-2}N^{\frac 12}\int_0^{\bar R} f_Z^2(\tanh(\frac{\eta}2))\sinh(\eta)^{2(n-1)} (\sinh(\frac{\eta}{N^{\frac 12}}))^{-(n-1)}d\eta
 \end{align*}  
 using as well \eqref{SCALED_DENS_PROP} with $\varepsilon=N^{-\frac 12} $ for the second inequality. Hence,
 \begin{align*}
\textcolor{black}{\mathcal T_N^{12}(\eta)}&\le C\textcolor{black}{N^{\frac n2}}\delta^{N-2}\int_0^{\bar R} f_Z^2(\tanh(\frac{\eta}2))\sinh(\eta)^{2(n-1)} \eta^{-(n-1)}d\eta\le C\textcolor{black}{N^{\frac n2}}\delta^{N-2}=C\textcolor{black}{N^{\frac n2}}\exp( (N-2) \ln(\delta))\\
&\le \frac{C}N \exp(-\frac N 2|\ln(\delta)|),
 \end{align*}
 recalling that $\delta<1 $ for the last inequality. We have thus established from the above control and \eqref{CTR_T_N_11} that,
 \begin{align*}
\textcolor{black}{\mathcal  T_N^1(\eta)}\le \frac CN\Big( \frac 1{\textcolor{black}{t^{2}}}\textcolor{black}{\Big(\frac 1{t^{\frac 12}}\wedge \frac{1}{t^{\frac n2}} \Big)}+1\Big).
 \end{align*}
 From the above control and \eqref{CTR_TAILS_HK}, \eqref{DECOUP_TAILS_GEN} we thus derive:
 $$\textcolor{black}{\mathcal  T_N(\eta)}\le \frac CN\Big( \frac 1{\textcolor{black}{t^{2}}}\textcolor{black}{\Big(\frac 1{t^{\frac 12}}\wedge \frac{1}{t^{\frac n2}} \Big)}+1\Big),
 $$
\textcolor{black}{ which together with \eqref{CTR_BULK} and \eqref{THE_DECOUP_LLT} completes the proof of the local limit Theorem \ref{LLT}.}
\end{trivlist}
 


 \bibliographystyle{alpha}

\bibliography{bibli}

\end{document}